\newtheorem{theorem}{Theorem}[section]
\newtheorem{conjecture}[theorem]{Conjecture}
\newtheorem{corollary}[theorem]{Corollary}
\newtheorem{lemma}[theorem]{Lemma}
\theoremstyle{remark}
\numberwithin{equation}{section}
\newcommand{\gfrak}{\mathfrak{g}}
\newcommand{\Rcal}{\mathscr{R}}
\newcommand{\Pro}{\mathbb{P}}
\newcommand{\Z}{\mathbb{Z}}
\newcommand{\C}{\mathbb{C}}
\newcommand{\Q}{\mathbb{Q}}
\newcommand{\N}{\mathbb{N}}
\newcommand{\A}{\mathbb{A}}
\newcommand{\ord}{\mathrm{ord}}
\newcommand{\rk}{\mathrm{rank}\,}
\newcommand{\alg}{\mathrm{alg}}
\newcommand{\Rat}{\mathrm{Rat}}
  \DeclareFontFamily{U}{wncy}{}
    \DeclareFontShape{U}{wncy}{m}{n}{<->wncyr10}{}
    \DeclareSymbolFont{mcy}{U}{wncy}{m}{n}
    \DeclareMathSymbol{\Sha}{\mathord}{mcy}{"58}
\begin{document}
\title[A diophantine definition of the constants]{A Diophantine definition of the constants in $\mathbb{Q}(z)$}

\author{Natalia Garcia-Fritz}
\address{ Departamento de Matem\'aticas,
Pontificia Universidad Cat\'olica de Chile.
Facultad de Matem\'aticas,
4860 Av.\ Vicu\~na Mackenna,
Macul, RM, Chile}
\email[N. Garcia-Fritz]{natalia.garcia@mat.uc.cl}%

\author{Hector Pasten}
\address{ Departamento de Matem\'aticas,
Pontificia Universidad Cat\'olica de Chile.
Facultad de Matem\'aticas,
4860 Av.\ Vicu\~na Mackenna,
Macul, RM, Chile}
\email[H. Pasten]{hector.pasten@mat.uc.cl}%

\thanks{N.G.-F. was supported by ANID Fondecyt Regular grant 1211004 from Chile. H.P was supported by ANID (ex CONICYT) FONDECYT Regular grant 1190442 from Chile. Both authors were supported by the NSF Grant No. DMS-1928930 while at the MSRI, Berkeley, in Summer 2022.}
\date{\today}
\subjclass[2010]{Primary: 11U09; Secondary: 14J27, 11G05} %
\keywords{Diophantine sets, rational functions, elliptic surfaces}%

\begin{abstract} It is an old problem in the area of Diophantine definability to determine whether $\mathbb{Q}$ is Diophantine in $\mathbb{Q}(z)$. We provide a positive answer conditional on two standard conjectures on elliptic surfaces.
\end{abstract}

\maketitle



\section{Introduction}


Let $A$ be a ring. A set $S\subseteq A^m$ is \emph{Diophantine} over $A$ if we have  $F_1,...,F_r\in A[x_1,...,x_m,y_1,...,y_n]$ such that for each ${\bf a}=(a_1,...,a_m)\in A^m$ the following holds: ${\bf a}\in S$ if and only if the equations $F_j({\bf a},{\bf y})=0$ for $1\le j\le r$ have a common solution in $A^n$. Equivalently, $S$ is Diophantine if it is positive existentially definable over the language of rings with parameters from $A$.

Let $z$ be transcendental over $\Q$. R.\ Robinson \cite{RR} proved in the sixties that $\Q$ is first order definable in the field $\Q(z)$, and since then the question of whether $\Q$ is Diophantine in  $\Q(z)$ remains open. It is known \cite{KoeLarge, FehmGeyer} that if $L$ is a large field or if $L^\times / (L^\times)^n$ is finite for some $n\ge 2$ coprime to ${\rm char}(L)$, then $L$ is Diophantine in $L(z)$ (see also Section 6.5 in \cite{DaansThesis}), but unfortunately this gives no indication on what to expect for $\Q$ in $\Q(z)$. In this note we prove a positive answer conditional on some standard conjectures on elliptic surfaces. For us, elliptic surfaces have a distinguished section by definition; see Section \ref{SecEll}.

Before stating our main result let us formulate the relevant conjectures. The first one concerns the rank of fibres in an elliptic surface and it originates in the work of Helfgott \cite{Helfgott}; see Section \ref{SecPositive}. 
\begin{conjecture}[Positivity of the rank]\label{Conj1} Let $\pi\colon X\to \Pro^1$ be an elliptic surface defined over $\Q$ and fix an affine chart $\A^1\subseteq \Pro^1$. Suppose that the elliptic surface has a fibre of multiplicative type over some point of $\A^1$. Let us define
$$
N(x)=\#\{n\in \Z : 1\le n\le x\mbox{ and the fibre }X_n\mbox{ is an elliptic curve with }\rk X_n(\Q)>0\}.
$$
Then there is $c>0$ such that for all sufficiently large $x$ we have $N(x)\ge c\cdot x$.
\end{conjecture}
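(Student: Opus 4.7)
This conjecture is very deep, and I do not expect a short unconditional argument; my plan follows the root-number approach originating with Helfgott \cite{Helfgott}. First, I would compute the global root number $W(X_n)$ of each fibre as a product of local root numbers $W_p(X_n)$, using Tate's algorithm applied to the generic fibre specialised at the integer $n$. The hypothesis that some fibre over $\A^1$ is of multiplicative type is essential: at a prime $p$ of multiplicative reduction for $X_n$ the local root number is, up to a bounded correction, a Jacobi symbol of a linear form in $n$, so the hypothesis forces $W(X_n)$ to be a genuinely oscillating function of $n$ rather than eventually constant.

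Second, the calculation should produce a factorisation of the shape $W(X_n) = \epsilon \cdot \chi_1(n) \cdot \chi_2(f(n))$, where $\epsilon \in \{\pm 1\}$ is bounded, $\chi_1$ is a Dirichlet character, and $\chi_2(f(n))$ is a Jacobi symbol evaluated at a polynomial $f$ arising from the non-multiplicative part of the discriminant. Showing that $W(X_n) = -1$ for a positive proportion of $n \le x$ then requires two analytic inputs: the squarefree-values conjecture for polynomials, in order to extract a genuine character from the factor $\chi_2(f(n))$; and a Chowla-type equidistribution statement for such character sums on polynomial inputs. Together these should deliver $\#\{n \le x : W(X_n) = -1\} \gg x$.

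Finally, the parity conjecture (or already the $2$-parity conjecture) converts $W(X_n) = -1$ into odd, hence positive, Mordell--Weil rank, which yields the asserted linear lower bound on $N(x)$. The main obstacle is not the root-number computation, which is mechanical via Tate's algorithm, but the trio of conjectural analytic inputs---squarefree values, Chowla on polynomials, and parity---each of which is itself a major open problem. A realistic intermediate target is to establish the conclusion assuming only squarefree values and parity; Helfgott's work already treats many families along these lines, and what would remain is uniformity across arbitrary elliptic surfaces with at least one multiplicative fibre.
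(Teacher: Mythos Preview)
Your proposal is aligned with the paper's own treatment. Note that this statement is a \emph{conjecture} in the paper, not a theorem; the paper does not prove it unconditionally either. What the paper does (Lemma~\ref{LemmaImply} and the surrounding Section~\ref{SecPositive}) is exactly your two-step reduction: (a) equidistribution of root numbers in the family, formulated as Conjecture~\ref{ConjAvg1}/\ref{ConjAvg2} and supported by Helfgott's work assuming standard analytic inputs, and (b) a weak parity statement, Conjecture~\ref{ConjWeakRoot}, asserting that $w(E)=-1$ forces $\rk E(\Q)>0$. Together these give $\delta_*(\{n:\rk X_n(\Q)>0\})\ge 1/2$, which is stronger than the required $N(x)\ge c\cdot x$.

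Two small points of comparison. First, the paper packages all the analytic difficulties (your squarefree-values and Chowla-type inputs) into the single Conjecture~\ref{ConjAvg1} and defers to \cite{Helfgott} for the reduction to those inputs, whereas you unpack them explicitly; your more detailed description of the factorisation $W(X_n)=\epsilon\cdot\chi_1(n)\cdot\chi_2(f(n))$ is a fair summary of Helfgott's machinery but is not spelled out in the paper. Second, the paper uses the slightly weaker Conjecture~\ref{ConjWeakRoot} rather than the full parity or $2$-parity conjecture you invoke: one does not need the rank to be odd, only to be positive. Otherwise your outline and the paper's are the same argument.
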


The second one is a conjecture of Ulmer \cite{Ulmer} that gives a bound on the degree of multisections of elliptic surfaces, provided that the Kodaira dimension is $1$. See Section \ref{SecUlmer} for details.
\begin{conjecture}[Boundedness of the degree of multisections]\label{Conj2} Let $\pi\colon X\to \Pro^1$ be an elliptic surface over $\C$ with $X$ a smooth projective surface of Kodaira dimension $1$, and let $F$ be a fibre of $\pi$. There is a number $M$ such that for every (possibly singular) rational curve $C\subseteq X$ not contained in a fibre, we have that $(C.F)\le M$. 
\end{conjecture}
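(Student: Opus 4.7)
The plan is to use Kodaira's canonical bundle formula to translate a bound on $(C.F)$ into a bound on $(K_X.C)$, and then to constrain rational curves via hyperbolicity of properly elliptic surfaces. Since $\kappa(X)=1$, Kodaira's formula
\[
K_X \;\equiv_{\Q}\; \pi^{*}\!\left(K_{\Pro^1} + \lambda + \sum_{i}\tfrac{m_i-1}{m_i}P_i\right),
\]
with $\lambda = R^1\pi_*\Ocal_X$ of degree $\chi(\Ocal_X)$ and the $m_iP_i$ the multiple fibres, exhibits $K_X$ as the pullback of a $\Q$-divisor $D$ on $\Pro^1$; positivity of Kodaira dimension is equivalent to $\deg D > 0$. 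For any curve $C \subseteq X$ not contained in a fibre one therefore has $(K_X.C) = (C.F)\cdot\deg D$, so bounding $(C.F)$ uniformly over rational $C$ is equivalent to bounding $(K_X.C)$ uniformly.

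Next I would apply arithmetic adjunction: for the normalization $\nu\colon \tilde C = \Pro^1 \to C$ of a rational multisection, writing $\delta_C = p_a(C) = \sum_{p\in C}\delta_p$ for the total $\delta$-invariant of the singularities of $C$, the identity $(K_X+C).C = 2p_a(C) - 2$ rearranges to
\[
(K_X.C) \;=\; 2\delta_C - 2 - C^2.
\]
Combined with the first step, the conjecture reduces to the uniform estimate $2\delta_C - 2 - C^2 \le M\cdot\deg D$ over all rational multisections $C$, i.e.\ to the simultaneous control of $\delta_C$ from above and $C^2$ from below.

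The main obstacle lies precisely here, and requires a genuine hyperbolicity input. The natural weapon is global jet differentials: for suitable $n,k \ge 1$ one seeks a nonzero section $\omega \in H^0\!\left(X,\, \mathrm{Sym}^n\Omega_X^1 \otimes \pi^*\Ocal_{\Pro^1}(-kn)\right)$. Pulled back along $\nu$, such an $\omega$ produces a section of a line bundle on $\tilde C=\Pro^1$ of degree $-2n - kn\cdot(C.F) < 0$, hence identically zero; the tangent direction of $C$ is then forced to lie in the zero locus of $\omega$, and a McQuillan-type algebraic-integrability statement confines $C$ to a proper Zariski-closed subset $Z \subsetneq X$ whose irreducible components not contained in a fibre are individual curves of fixed fibre-degree, yielding $M$. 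The hard step is producing such an $\omega$ for properly elliptic surfaces: for surfaces of general type this is Bogomolov's theorem, but for $\kappa = 1$ it is open in general. The Green--Griffiths--Lang conjecture predicts the containment of all rational curves on the non-uniruled $X$ in a proper Zariski-closed subset and would immediately imply Ulmer's bound.

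As a sanity check I would also pass to the base change: the surface $Y = X \times_{\Pro^1} \tilde C$ acquires a tautological section coming from $\nu$, and Shioda--Tate--Wazir applied to $Y$ bounds the Mordell--Weil rank of sections of $Y \to \tilde C$ by the Picard number of $Y$, which grows at most linearly in $\chi(\Ocal_Y) \sim (C.F) \cdot \chi(\Ocal_X)$. This by itself does not close the argument --- it yields only polynomial rather than uniform control --- but it clarifies that the true content of the conjecture is a hyperbolicity statement going beyond anything the N\'eron--Severi lattice alone can detect.
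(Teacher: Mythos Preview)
The statement you were asked to prove is labeled in the paper as a \emph{conjecture} (Conjecture~\ref{Conj2}), and the paper does not prove it; it is assumed as a hypothesis throughout. So there is no ``paper's own proof'' to compare against.

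Your proposal is not a proof either, and you are candid about this. The opening reduction is correct and matches the paper's own remarks in Section~\ref{SecUlmer}: since $\kappa(X)=1$, the canonical class $K_X$ is numerically a positive multiple of a fibre, so bounding $(C.F)$ is equivalent to bounding $(K_X.C)$, i.e.\ Conjecture~\ref{Conj2} is the specialization of Ulmer's Conjecture~\ref{ConjGeneralUlmer} to properly elliptic surfaces. The adjunction step rewriting $(K_X.C)=2\delta_C-2-C^2$ is also fine.

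The genuine gap is exactly where you locate it: the jet-differential argument requires a nonzero section of $\mathrm{Sym}^n\Omega^1_X$ twisted by a negative vertical class, and for $\kappa(X)=1$ the existence of such a section is not known. Bogomolov's theorem needs $c_1^2-c_2>0$, which fails here since $c_1^2=0$ on an elliptic surface. You then appeal to Green--Griffiths--Lang, but that is itself a conjecture, so you have only reduced one open problem to another. The Shioda--Tate paragraph, as you say, gives at best linear growth and does not close the argument. In short, your write-up is a reasonable survey of why the conjecture is plausible and what would imply it, but it does not constitute a proof, and neither does anything in the paper.
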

Our main result is
\begin{theorem}\label{ThmMain} If Conjectures \ref{Conj1} and \ref{Conj2} are true, then $\Q$ is Diophantine in $\Q(z)$. Furthermore, in this case every listable subset $T\subseteq \Q$ is Diophantine over $\Q(z)$.
\end{theorem}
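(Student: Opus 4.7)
The plan is to produce a Diophantine definition of the set of constants $\mathbb{Q} \subseteq \mathbb{Q}(z)$ by using the rational-point behavior of a carefully chosen elliptic surface $\pi: X \to \mathbb{P}^1$ defined over $\mathbb{Q}$. I would fix $X$ to be smooth projective of Kodaira dimension $1$ with a multiplicative fibre over some integer point of a chosen affine chart $\mathbb{A}^1 \subseteq \mathbb{P}^1$, so that both Conjecture \ref{Conj1} and Conjecture \ref{Conj2} apply to $X$. Such elliptic surfaces are easy to exhibit concretely.

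The first main step is to associate to each $t \in \mathbb{Q}(z)$ a polynomial system in auxiliary variables $\vec y$ over $\mathbb{Q}(z)$ whose solvability in $\mathbb{Q}(z)$ is equivalent to $t \in \mathbb{Q}$. Geometrically, the auxiliary variables should parametrize, via $t$, a rational curve $C \subseteq X$ not contained in a fibre, whose intersection number $(C.F)$ with a generic fibre $F$ grows with the degree of $t$ regarded as a morphism $\mathbb{P}^1 \to \mathbb{P}^1$. If $t \in \mathbb{Q}$ is constant, one produces honest $\mathbb{Q}(z)$-valued solutions by selecting, for a positive-density set of integers $n$, a non-torsion rational point of $X_n(\mathbb{Q})$, whose existence is guaranteed by Conjecture \ref{Conj1}, and assembling these specializations into a global $\mathbb{Q}(z)$-valued solution; this handles the ``if'' direction. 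If instead $t \notin \mathbb{Q}$, any $\mathbb{Q}(z)$-solution of the system would produce a rational curve $C$ with $(C.F)$ exceeding the uniform bound $M$ from Conjecture \ref{Conj2}, contradicting that conjecture.

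The main obstacle is calibrating the Diophantine system so that non-constancy of $t$ really forces the associated rational multisection to have intersection number above the threshold $M$, uniformly in $t$. This likely requires an iterated base-change or fibre-product construction in which $\deg(t)$ directly lower-bounds the degree of the associated multisection, so that no single fixed $M$ can accommodate multisections coming from all non-constant $t$. The interplay between the arithmetic abundance supplied by Conjecture \ref{Conj1} and the geometric rigidity supplied by Conjecture \ref{Conj2} must be orchestrated precisely, and arranging the combinatorics of specializations to realize both at once is where the work concentrates.

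For the final claim about listable sets $T \subseteq \mathbb{Q}$, one first upgrades the definition of $\mathbb{Q}$ to a Diophantine definition of $\mathbb{Z} \subseteq \mathbb{Q}(z)$, exploiting the transcendental $z$ via standard Denef-style tricks (for instance, encoding bounded-denominator conditions Diophantinely by combining the $\mathbb{Q}$-definability above with pole/order conditions at $z$). The Matiyasevich–Robinson–Davis–Putnam theorem then implies that every listable subset of $\mathbb{Z}^k$ is Diophantine over $\mathbb{Z}$, and representing rationals as pairs $(a,b) \in \mathbb{Z}^2$ with $b \ne 0$ transfers this to every listable $T \subseteq \mathbb{Q}$, which becomes Diophantine in $\mathbb{Q}(z)$.
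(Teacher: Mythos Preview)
Your overall architecture---use an elliptic surface, invoke Conjecture~\ref{Conj1} for existence of points over constants, and Conjecture~\ref{Conj2} to rule out solutions over non-constants---matches the paper's, but two essential steps are missing, and the paper fills each with a specific construction you have not supplied.

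\textbf{The ``if'' direction and Schnirelmann.} You aim for a system whose solvability is \emph{equivalent} to $t\in\Q$, but Conjecture~\ref{Conj1} only guarantees positive rank for a positive-density set of integer fibres, not for every $c\in\Q$. For a fixed constant $t=c$ with $\rk X_c(\Q)=0$ your system would have no solution, so the equivalence fails. (The phrase ``assembling these specializations into a global $\Q(z)$-valued solution'' does not parse: once $t=c$ is fixed, there is a single fibre $X_c$, and points on other fibres $X_n$ cannot be interpolated into a $\Q(z)$-point of anything.) The paper does not try to capture all of $\Q$ directly. It defines the Diophantine set $\Rcal=\{f\in\Q(z)\setminus\Sigma:\rk E_f(\Q(z))>0\}$, shows $\Rcal\subseteq\Q$, and uses Conjecture~\ref{Conj1} only to conclude that $\Rcal\cap\N$ has positive lower density. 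Schnirelmann's theorem on additive bases (Lemma~\ref{LemmaCriterion}) then upgrades this to a Diophantine definition of $\Q$. This additive-combinatorics step is absent from your outline and is not optional.

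\textbf{The ``only if'' direction and the preprocessing base change.} You correctly flag as the main obstacle the need to force $(C.F)>M$ for \emph{every} non-constant $t$, but you leave it unresolved. Since $M$ is unknown and the Diophantine formula must be fixed, and since non-constant $t$ can have degree $1$, no calibration of the kind you describe can work directly. The paper's solution is to use $M$ in a preprocessing step rather than inside the formula: start from the Legendre family $y^2=x(x+1)(x+t^d)$ with $d\ge 5$ (which has Mordell--Weil rank $0$), and base-change by a mildly ramified $f\in\Q(t)$ of \emph{prime} degree $p\ge M+2$. A Riemann--Hurwitz computation (Lemma~\ref{LemmaMildly}) shows that the curve $f(x)=g(y)$ has positive genus whenever $2\le\deg g\le M$, so any rational curve on the base-changed surface $X^{(d)}_f$ must map to a \emph{section} of $X^{(d)}$; combined with rank $0$, this forces $X^{(d)}_f$ to contain only finitely many rational curves (Lemma~\ref{LemmaFiniteness}). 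On such a surface, every non-constant substitution automatically yields rank $0$ over $\C(z)$ (Lemma~\ref{LemmaErk0}), with no further calibration needed. This is the missing geometric idea.

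Your final paragraph on listable sets (Denef to get $\Z$ Diophantine in $\Q(z)$, then MRDP) is essentially what the paper does.
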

Here we recall that $T\subseteq \Q$ is \emph{listable} if there is a Turing machine that produces all the elements of $T$ and nothing else (possibly with repetitions, in some order).

Conjecture \ref{Conj1} is essential in our approach. In Section \ref{SecPositive} it is observed that it follows from certain conjecture strongly supported by the work of Helfgott \cite{Helfgott} and a small part of the Birch and Swinnerton-Dyer conjecture. 

On the other hand, what we really need from Conjecture \ref{Conj2} is the following consequence (see Theorem \ref{ThmV2}), which perhaps can be proved unconditionally by other means:

\begin{conjecture}\label{ConjAdHoc} There is an elliptic surface $\pi\colon X\to \Pro^1$ defined over $\Q$ satisfying the following two conditions:
\begin{itemize}
\item[(i)] $\pi$ has a fibre of multiplicative reduction defined over $\C$;
\item[(ii)] $X$ contains at most finitely many (possibly singular) rational curves defined over $\C$.
\end{itemize}
\end{conjecture}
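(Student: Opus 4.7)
I would exhibit an explicit elliptic surface $\pi\colon X \to \Pro^1$ defined over $\Q$ and combine Conjecture~\ref{Conj2} with an analysis of Mordell-Weil groups of bounded-degree base changes to force finiteness of rational curves. Concretely, take a Weierstrass model $y^2 = x^3 + a(z)x + b(z)$ with $a,b \in \Q[z]$ chosen so that (a) $\chi(\Ocal_X) \ge 3$, giving Kodaira dimension one; (b) the discriminant has at least one simple zero over $\C$, producing a multiplicative fibre of type $I_1$ and verifying condition~(i); and (c) the coefficients are sufficiently generic that $\MW(X/\C(z)) = 0$ and $\rho(X)$ attains the minimum value permitted by Shioda-Tate given the singular fibre configuration. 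Such $X$ defined over $\Q$ exist by a Bertini-type plus Hilbert-irreducibility argument applied to the moduli of Weierstrass data.

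For condition~(ii), let $M = M(X)$ be the bound from Conjecture~\ref{Conj2}. The rational curves contained in fibres of $\pi$ are components of the finitely many singular fibres, so they form a finite set. Any rational multisection $C$ of degree $d = (C.F) \le M$ determines, via its normalization $\nu\colon \Pro^1 \to C \hookrightarrow X$, a degree-$d$ cover $f = \pi\circ\nu \colon \Pro^1 \to \Pro^1$ together with a section $s$ of the pulled-back fibration $X_f := X \times_{\Pro^1,f} \Pro^1 \to \Pro^1$, i.e., an element $s \in \MW(X_f/\C(t))$. Conversely every such pair $(f,s)$ yields a rational multisection, and two pairs produce the same $C$ only if they differ by the $\mathrm{PGL}_2$-action on the source $\Pro^1$. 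Thus the rational multisections are parametrized by a subset of pairs $(f,s)$ with $\deg f \le M$.

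The main obstacle is proving this parametrizing set is finite. For generic $X$ one expects $\rho(X_f)$ to attain its minimal Shioda-Tate value for \emph{every} degree-$\le M$ cover $f$, so that $\MW(X_f/\C(t))$ is torsion; since $X$ has non-isotrivial generic fibre $E_\eta$, the torsion available in each fixed finite extension of $\C(z)$ is finite and structurally bounded by $E_\eta[N]$ for an appropriate $N$. The delicate point is controlling, uniformly in $f$, the Noether-Lefschetz-type locus in the Hurwitz scheme of degree-$\le M$ covers of $\Pro^1$ along which $\rho(X_f)$ jumps, and showing that only finitely many such jumps actually give rise to rational multisections of $X$. One path is a Hodge-theoretic argument on the Hurwitz space; a more direct route is to pick $X$ so that $E_\eta(\overline{\C(z)})$ is torsion, eliminating non-torsion contributions entirely. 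Resolving the circular dependence between the required genericity of $X$ and the bound $M = M(X)$ from Conjecture~\ref{Conj2} is the principal technical hurdle of the argument.
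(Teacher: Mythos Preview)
Your outline correctly isolates the structure of the problem---rational curves in fibres are harmless, and a rational multisection of degree $d\le M$ corresponds to a section of some degree-$d$ base change $X_f$---but the circularity you flag at the end is fatal, not merely technical. The Ulmer bound $M=M(X)$ depends on $X$, and the genericity you need (that $\MW(X_f/\C(t))$ is torsion for \emph{every} $f$ of degree $\le M$) is a countable intersection of Noether--Lefschetz conditions indexed by a positive-dimensional Hurwitz scheme; there is no reason this intersection should contain a point defined over $\Q$, and you give no mechanism to produce one. Your fallback suggestion of choosing $X$ with $E_\eta(\overline{\C(z)})$ torsion is not viable: for any non-isotrivial elliptic curve over $\C(z)$ the group $E_\eta(\overline{\C(z)})$ is uncountable, so no such $X$ exists.

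The paper resolves the circularity by a two-stage construction rather than a genericity argument. First it takes a fixed, completely explicit surface with Mordell--Weil rank $0$ over $\C(t)$ (the $d$-th power base change $X^{(d)}$ of the Legendre family, $d\ge 5$, where rank $0$ is a theorem of Ulmer). Conjecture~\ref{Conj2} is applied to $X^{(d)}$, giving a bound $M$. \emph{Only then} is the final surface built, as a further base change $X^{(d)}_f$ by a mildly ramified $f\in\Q(t)$ of prime degree $p\ge M+2$. The point is that any rational multisection $C$ of $X^{(d)}_f$ projects to a rational curve in $X^{(d)}$ of degree $q\le M<p$; if $q\ge 2$, the fibre product of $f$ (degree $p$, mildly ramified) with the degree-$q$ map $\Pro^1\to\Pro^1$ coming from that projection is forced by Riemann--Hurwitz to have positive genus, contradicting the existence of the rational curve $C$ mapping to it. Hence the image in $X^{(d)}$ must be a \emph{section}, and those are finite by the rank-$0$ input. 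The key idea you are missing is this decoupling: apply Ulmer's bound to an intermediate surface whose sections you already control, then base-change by a map whose degree beats that bound and whose ramification forces a genus obstruction.
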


Here we stress the fact that just \emph{one} example of such a surface suffices for our approach. That Conjecture \ref{ConjAdHoc} follows from Conjecture \ref{Conj2} will be explained in Section \ref{SecConjs}.

The proof of Theorem \ref{ThmMain} uses algebraic geometry, arithmetic of elliptic curves and elliptic surfaces, some results of additive number theory, and some arguments from logic and computability theory. Thus, for the convenience of the reader, we will provide the necessary background and references on each of these topics.


\section{Schnirelmann's density}

\subsection{Densities} Let $\N=\{0,1,2,...\}$ be the set of non-negative integers. Let $S\subseteq \N$ be a set. The \emph{lower density} of $S$ is
$$
\delta_*(S)=\liminf_{x\to \infty} \frac{1}{x}\cdot \# \{n\in S : n\le x\}
$$
and the \emph{upper density} of $S$ is
$$
\delta^*(S)=\limsup_{x\to \infty} \frac{1}{x}\cdot \# \{ n\in S : n\le x\}.
$$
Both quantities are well-defined and they satisfy $0\le \delta_*(S)\le \delta^*(S)\le 1$. If they are equal, their common value is denoted by $\delta(S)$ and this number is called the \emph{natural density} of $S$ in $\N$.

There is another notion of density that will be useful for us. The \emph{Schnirelmann density} of $S$ is
$$
\sigma(S)=\inf_{k\ge 1}\frac{1}{k}\cdot \#\{n\in S : 1\le n\le k\}
$$
where $k$ varies over the positive integers. For instance, if $S=\{2n : n\in \N\}$ is the set of even non-negative integers we have $\sigma(S)=0$, while if $S$ is the set of odd positive integers we have $\sigma(S)=1/2$. In particular, the Schnirelman density behaves quite differently from the lower, upper, and natural densities. Nevertheless we have the following simple observation:

\begin{lemma}\label{LemmaDensities}
Let $S\subseteq \N$. If $\delta_*(S)>0$ and $1\in S$ then $\sigma(S)>0$.
\end{lemma}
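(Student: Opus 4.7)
The plan is to convert the hypothesis $\delta_*(S)>0$, which controls the counting function $S(x):=\#\{n\in S: n\le x\}$ only asymptotically, into a uniform lower bound on $\#\{n\in S:1\le n\le k\}/k$ valid for every $k\ge 1$. Set $\alpha:=\delta_*(S)>0$. By the definition of liminf, there exists $x_0\ge 1$ such that $S(x)\ge (\alpha/2)x$ for every $x\ge x_0$.

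Next, I would compare the Schnirelmann counting function $S^\circ(k):=\#\{n\in S:1\le n\le k\}$ with $S(k)$. These two quantities differ by at most $1$, depending on whether $0\in S$; explicitly, $S^\circ(k)\ge S(k)-1$. Hence, for every $k\ge \max(x_0,\,4/\alpha)$ we have
\[
\frac{S^\circ(k)}{k}\ge \frac{S(k)-1}{k}\ge \frac{\alpha}{2}-\frac{1}{k}\ge \frac{\alpha}{4}.
\]
This handles the tail of the infimum defining $\sigma(S)$.

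It remains to control the finitely many small values $1\le k< K$, where $K:=\lceil\max(x_0,4/\alpha)\rceil$. Here is where the hypothesis $1\in S$ enters crucially: it guarantees $S^\circ(k)\ge 1$ for every $k\ge 1$, so $S^\circ(k)/k\ge 1/k\ge 1/K$ on this range. Combining with the bound of the previous step,
\[
\sigma(S)=\inf_{k\ge 1}\frac{S^\circ(k)}{k}\ge \min\!\left(\frac{1}{K},\frac{\alpha}{4}\right)>0.
\]

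The only subtle point is the asymmetry between the two density notions: Schnirelmann density is sensitive to the behavior of $S$ at small integers (the set $\{N,N+1,\ldots\}$ has $\delta_*=1$ but $\sigma=0$), whereas the lower density ignores any finite initial segment. The hypothesis $1\in S$ is exactly what is needed to keep $S^\circ(k)$ away from $0$ on this small range, which I expect to be the only non-routine aspect of the argument.
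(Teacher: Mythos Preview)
Your argument is correct. The paper states this lemma as a ``simple observation'' and does not supply a proof, so there is nothing to compare against; what you have written is exactly the routine argument the authors are leaving to the reader: use $\delta_*(S)>0$ to bound $S^\circ(k)/k$ from below for all large $k$, and use $1\in S$ to handle the finitely many small $k$.
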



\subsection{Schnirelmann's theorem}

Given $S_1,S_2\subseteq \N$ we write $S_1+S_2=\{a+b : a\in S_1\mbox{ and }b\in S_2\}$. For a set $S\subseteq \N$ and a positive integer $h\ge 1$ we write $hS=S+S+...+S$ with $S$ repeated $h$ times. We say that $S\subseteq \N$ is \emph{an additive basis of finite order} if there is $h\ge 1$ with $hS=\N$; note that in this case necessarily $0\in S$. One of the main motivations to study the Schnirelmann density is the following result.

\begin{theorem}[Schnirelmann]\label{ThmSch} Let $S\subseteq \N$ with $0\in S$ and $\sigma(S)>0$. Then $S$ is an additive basis of finite order.
\end{theorem}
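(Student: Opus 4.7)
The plan is the classical one, due to Schnirelmann himself. Writing $T(N)=\#(T\cap[1,N])$ for any $T\subseteq\N$, the engine is the sumset density inequality
\begin{equation*}
\sigma(A+B)\;\geq\;\sigma(A)+\sigma(B)-\sigma(A)\sigma(B),
\end{equation*}
valid whenever $0\in A\cap B$. Rewriting it in the multiplicative form $1-\sigma(A+B)\leq(1-\sigma(A))(1-\sigma(B))$ and iterating with $A=S$ and $B=(h-1)S$, I obtain $1-\sigma(hS)\leq(1-\sigma(S))^h$. Since $\sigma(S)>0$, the right side tends to $0$, so for some $h_0$ one has $\sigma(h_0S)>1/2$.

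To prove the sumset inequality, I would fix $N\geq 1$ and list the elements of $A\cap[1,N]$ as $a_1<\cdots<a_k$, setting $a_0=0$ and $a_{k+1}=N+1$. For each gap $G_i=[a_i+1,\,a_{i+1}-1]$ of length $L_i$, the translates $a_i+(B\cap[1,L_i])$ lie inside $(A+B)\cap G_i$ and are pairwise disjoint across gaps, contributing at least $\sigma(B)\sum_i L_i=\sigma(B)(N-k)$ elements in total. Adding the $k$ elements of $A\cap[1,N]$ (which lie in $A+B$ because $0\in B$), dividing by $N$, and taking the infimum over $N$ yields the claimed density bound.

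The argument concludes with a completion lemma: if $0\in A\cap B$ and $\sigma(A)+\sigma(B)\geq 1$, then $A+B=\N$. To see this, fix $n\geq 1$ and consider $X=A\cap[0,n]$ and $Y=\{n-b:b\in B\cap[0,n]\}$, both subsets of $[0,n]$. Using $0\in A\cap B$, their cardinalities satisfy $|X|+|Y|\geq A(n)+B(n)+2\geq n+2>|[0,n]|$, so they intersect, forcing $n\in A+B$. Applied to $A=B=h_0S$ with $2\sigma(h_0S)>1$, this gives $2h_0S=\N$, so $h=2h_0$ works. Since this is a standard result from additive number theory, there is no genuine obstacle; the only step requiring a bit of care is the bookkeeping in the gap-counting argument that proves the density inequality, particularly the trivial edge cases $A\cap[1,N]=\emptyset$ and $N\in A$.
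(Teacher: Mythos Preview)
Your argument is correct and is precisely the classical Schnirelmann proof: the gap-counting establishes the density inequality, iteration pushes $\sigma(hS)$ past $1/2$, and the pigeonhole completion lemma finishes. The paper does not give its own proof of this theorem but simply cites Section~11.3 of Nathanson's \emph{Elementary Methods in Number Theory}, where exactly this argument appears; so your proposal matches the referenced proof.
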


We refer the reader to Section 11.3 in \cite{Nathanson} for a proof of this theorem.


\subsection{A criterion for $\Q$ being Diophantine in a $\Q$-algebra}

\begin{lemma}\label{LemmaCriterion} Let $A$ be a (commutative, unitary)  $\Q$-algebra. Suppose that there is a set $T\subseteq \Q$ with the following properties
\begin{itemize}
\item[(i)] $T$ is Diophantine over $A$, and
\item[(ii)] The set $S=T\cap \N$ satisfies $\delta_*(S)>0$.
\end{itemize}
Then $\Q$ is Diophantine in $A$.
\end{lemma}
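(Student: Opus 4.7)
The plan is to combine Schnirelmann's theorem with a Diophantine division trick: produce an $h$ such that every natural number is an $h$-fold sum of elements of a slight enlargement of $T$, and then recover all of $\Q$ as ratios of differences of such sums.

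First I would arrange that $\{0,1\}\subseteq T$ by replacing $T$ with $T\cup\{0,1\}$. The singletons $\{0\}$ and $\{1\}$ are visibly Diophantine over $A$, and the union of two Diophantine sets is Diophantine when $A$ is a domain via the product trick $F_1F_2=0 \Leftrightarrow F_1=0\vee F_2=0$; this certainly applies in the main intended application $A=\Q(z)$. Adjoining two points does not change the lower density of $S=T\cap \N$, so $\delta_*(S)>0$ is preserved and now $0,1\in S$. By Lemma \ref{LemmaDensities} this forces $\sigma(S)>0$, and then Theorem \ref{ThmSch} produces a positive integer $h$ with $hS=\N$.

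Next I would chase inclusions. Since $hS\subseteq hT$, we have $\N\subseteq hT$, and since $T\subseteq\Q$ and $\Q$ is a subring of the $\Q$-algebra $A$, also $hT\subseteq\Q$. Setting $D=hT-hT$, which is Diophantine as the polynomial image of $T^{2h}$, we obtain
$$ \Z \;=\; \N-\N \;\subseteq\; D \;\subseteq\; \Q. $$
Because every rational is a ratio of two integers with nonzero denominator,
$$ \Q \;=\; \{x/y \,:\, x,y\in D,\ y\neq 0\}. $$
The right hand side is Diophantine in $A$: an element $q\in A$ lies in it if and only if there exist $x,y,u\in A$ and witnesses $t_1,\ldots,t_h,s_1,\ldots,s_h,t'_1,\ldots,t'_h,s'_1,\ldots,s'_h\in T$ (each membership unpacked via the defining polynomials of $T$) such that $x=\sum t_i-\sum s_i$, $y=\sum t'_i-\sum s'_i$, $qy=x$, and $yu=1$. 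This exhibits $\Q$ as a Diophantine subset of $A$.

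The main point requiring care is really the very first step, which uses that disjunctions can be realized Diophantinely to enlarge $T$ to contain $\{0,1\}$; this is fine in any domain, and so in particular in the target ring $\Q(z)$, but it is the only place where the argument touches anything beyond Diophantine closure under polynomial images. Once past this mild point, Schnirelmann forces $\N\subseteq hT$, and Diophantine closure under subtraction, multiplication, and inversion of the denominator does the rest.
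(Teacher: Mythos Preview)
Your proof is correct and follows essentially the same route as the paper's: adjoin $\{0,1\}$ to $T$, invoke Lemma \ref{LemmaDensities} and Schnirelmann's theorem to obtain an additive basis of finite order, and then recover $\Q$ by taking differences and ratios. You are in fact more explicit than the paper about the one delicate point---that forming the union $T\cup\{0,1\}$ Diophantinely uses that $A$ is an integral domain---which the paper's proof passes over in silence; since the intended application has $A=\Q(z)$, this is harmless.
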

\begin{proof} Let $T'=\{0,1\}\cup T$ and $S'=T'\cap \N=\{0,1\}\cup S$. By Lemma \ref{LemmaDensities} we have $\sigma(S')>0$, and since  $0\in S'$ Theorem \ref{ThmSch} gives that $S'$ is an additive basis of finite order. Thus, from $T'$ we can construct a subset $T^*\subseteq \Q$ which is Diophantine over $A$ and contains $\N$. The result follows by taking negatives and fractions.  
\end{proof}


\section{Preliminaries on elliptic surfaces}


\subsection{Elliptic surfaces}\label{SecEll}

We refer the reader to \cite{SchuettShioda} for a general introduction and reference on the topic of elliptic surfaces.

Let $k$ be a field of characteristic different from $2$ and $3$. An \emph{elliptic surface} over $\Pro^1$ defined over $k$ is a morphism $\pi\colon X\to \Pro^1$ along with a distinguished section $\sigma_0\colon\Pro^1\to X$, both defined over $k$, satisfying that $X$ is a smooth projective surface and the generic fibre of $\pi$, denoted by $X_\eta$, is an elliptic curve over $k(\Pro^1)$ with neutral element induced by $\sigma_0$. Furthermore, we require that $\pi\colon X\to \Pro^1$ be relatively minimal: There are no $(-1)$-curves  contained in the fibres of $\pi$.

Sections of $\pi$ form an abelian group whose neutral element is $\sigma_0$. This structure is compatible with the group structure of $X_\eta$.

Let $U_\pi\subseteq \Pro^1$ be the locus over which $\pi$ is smooth. Then $U_\pi$ is a non-empty open set of $\Pro^1$ and for all $b\in U_\pi$ the fibre $X_b$ is an elliptic curve defined over the residue field at $b$, with neutral element $\sigma_0(b)$. 

For each of the finitely many points $b\in\Pro^1(k^{\alg})$ not in $U_\pi$, the fibre $X_\pi$ is singular. Kodaira gave a complete classification of the possible singular fibres of an elliptic surface. These come in two types: multiplicative or additive. We will be mostly concerned with the multiplicative case, which corresponds to fibres of type $I_n$ ($n\ge 1$) in Kodaira's classification, and in the additive case we will only need to consider fibres of type $I_n^*$. Fibres of type $I_n$ have $n$ irreducible components while fibres of type $I_n^*$ have $n+5$ irreducible components. 

Finally, we recall that the underlying surface $X$ can have Kodaira dimension $\kappa(X)$ equal to $-\infty$, $0$, or $1$ (general type $\kappa(X)=2$ does not occur for elliptic surfaces). We will be mostly interested in the case $\kappa(X)=1$.


\subsection{Topological Euler characteristic} Let $\pi\colon X\to \Pro^1$ be an elliptic surface defined over $\C$. Let $e(X)$ be the topological Euler characteristic of $X$. For each $b\in \Pro^1$ let $m_b$ be the number of irreducible components of $X_b$ and define
$$e_b(X) = 
\begin{cases}
0 &\mbox{ if $X_b$ is smooth}\\
m_b &\mbox{ if $X_b$ is multiplicative}\\
m_b+1 &\mbox{ if $X_b$ is additive}.
\end{cases}
$$
We have (cf.\ Theorem 6.10 \cite{SchuettShioda}):

\begin{lemma}\label{LemmaTopE} With the previous notation, $e(X)=\sum_{b\in \Pro^1} e_b(X)$.
\end{lemma}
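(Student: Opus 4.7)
The plan is to combine the additivity of the topological Euler characteristic on complex algebraic varieties with its multiplicativity in smooth proper families, and then to reduce the lemma to a finite combinatorial calculation on each Kodaira singular fiber.

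I would begin by letting $S \subseteq \Pro^1$ denote the finite set of points $b$ where $X_b$ is singular, and setting $B = \Pro^1 \setminus S$ and $V = \pi^{-1}(B)$. By additivity of $e$ along open-closed stratifications of complex algebraic varieties,
$$
e(X) = e(V) + \sum_{b \in S} e(X_b).
$$
The restriction $\pi|_V \colon V \to B$ is smooth and proper with elliptic curve fibers, hence, by Ehresmann's theorem, a $C^\infty$-fiber bundle whose fiber is a real $2$-torus $T$; multiplicativity of $e$ in such bundles then yields $e(V) = e(T)\cdot e(B) = 0$, since $e(T) = 0$. Combined with the convention that smooth fibers satisfy $e_b(X) = 0$, this already reduces the claim to establishing $e(X_b) = e_b(X)$ for each singular fiber $X_b$.

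The remaining verification proceeds case by case along Kodaira's list of singular fibers. For a multiplicative fiber of type $I_n$, the fiber is a cycle of $n$ smooth rational curves meeting transversally in $n$ nodes (with $I_1$ interpreted as a single nodal rational curve), so inclusion-exclusion gives $e(X_b) = 2n - n = n = m_b$. For each additive Kodaira type ($II$, $III$, $IV$, $I_n^*$, $IV^*$, $III^*$, $II^*$) the fiber is a prescribed configuration of smooth rational curves meeting in a specified combinatorial pattern, and an analogous inclusion-exclusion computation, using that a cusp or tangency does not alter the underlying topological space relative to a smooth point or transverse crossing, produces $e(X_b) = m_b + 1$ in every case. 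The main obstacle is precisely this case-by-case verification, but it is entirely routine once the Kodaira list is in hand, reducing each case to counting components and intersection points and applying inclusion-exclusion; the full computation is standard and is recorded as Theorem 6.10 in \cite{SchuettShioda}, the reference the authors cite.
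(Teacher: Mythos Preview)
Your argument is correct and is the standard proof of this formula. The paper itself does not supply a proof at all: it simply records the statement with a citation to Theorem~6.10 of \cite{SchuettShioda}, which is exactly the reference you invoke at the end, so your proposal is a fleshed-out version of what the authors leave to that source.
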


By Noether's formula one sees that  $e(X)$ is a multiple of $12$ (cf.\ Section 6.7 in \cite{SchuettShioda}). This integer determines the Kodaira dimension of $X$ (cf.\ Section 4.10 of \cite{SchuettShioda}).

\begin{lemma}\label{LemmaKdim} With the previous notation, we have
$$
\kappa(X)=\begin{cases}
-\infty & \mbox{ if }e(X)/12 =1\\
0 & \mbox{ if }e(X)/12 =2\\
1 & \mbox{ if }e(X)/12 \ge 3.
\end{cases}
$$
\end{lemma}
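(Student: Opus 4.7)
The plan is to identify $e(X)/12$ with the arithmetic genus $\chi = \chi(\Ocal_X)$ via Noether's formula, and then read off $\kappa(X)$ from Kodaira's canonical bundle formula.

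First, I would invoke Noether's formula $12\chi(\Ocal_X) = K_X^2 + e(X)$ for a smooth projective surface. On a relatively minimal elliptic surface, $K_X$ is numerically equivalent to a rational multiple of a fiber, so $K_X^2 = 0$. Hence $e(X) = 12\chi(\Ocal_X)$, and the quantity $e(X)/12$ appearing in the lemma is precisely $\chi$. Because $\pi$ admits a section $\sigma_0$, no fiber of $\pi$ is a multiple fiber: $\sigma_0 \cdot F = 1$ for any fiber $F$, so a hypothetical multiple fiber $mF'$ with $m\ge 2$ would force $m(\sigma_0\cdot F') = 1$, which is impossible. Under this absence of multiple fibers, Kodaira's canonical bundle formula specializes to the linear equivalence $K_X \sim (\chi - 2)\,F$ for a general fiber $F$: the $-2$ comes from $K_{\Pro^1}$ and the $+\chi$ from the degree of the Hodge bundle $R^1\pi_*\Ocal_X$.

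With this in hand, a short case analysis finishes the proof. If $\chi = 1$, then $K_X \sim -F$; since any nonzero section of $nK_X$ would produce an effective divisor linearly equivalent to $-nF$, and $-nF$ has negative intersection with any ample class, we get $h^0(nK_X) = 0$ for all $n \ge 1$ and therefore $\kappa(X) = -\infty$. If $\chi = 2$, then $K_X \sim 0$ and so $\kappa(X) = 0$. If $\chi \ge 3$, then $K_X \sim (\chi - 2)F$ with $\chi - 2 \ge 1$, and pulling back sections from $\Pro^1$ gives $h^0(nK_X) \ge n(\chi - 2) + 1$, growing linearly in $n$; since $K_X^2 = 0$ rules out quadratic growth, we conclude $\kappa(X) = 1$.

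The only non-routine step is the canonical bundle formula in the stated simplified form, which is where the presence of a section (ruling out multiple fibers) and relative minimality are essential; this is precisely the content of Kodaira's formula for elliptic surfaces as recorded in Section 4.10 of \cite{SchuettShioda}. Once this is granted, Noether's formula and the dimension count above yield the claimed trichotomy in a direct manner.
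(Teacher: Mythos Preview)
Your argument is correct and is precisely the standard proof one finds in the reference the paper cites. The paper itself does not give a proof of this lemma at all: it simply records the statement with the parenthetical pointer to Section 4.10 of \cite{SchuettShioda}. What you have written---Noether's formula plus $K_X^2=0$ to get $e(X)/12=\chi(\Ocal_X)$, the observation that a section rules out multiple fibres so that Kodaira's canonical bundle formula reads $K_X\sim(\chi-2)F$, and then the case split on $\chi$---is exactly the content of that reference, so there is nothing to compare.
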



\subsection{Ulmer's conjecture} \label{SecUlmer}

In \cite{Ulmer} Ulmer proposed the following conjecture:

\begin{conjecture}[Ulmer]\label{ConjGeneralUlmer} Let $X$ be a smooth projective surface over $\C$ and let $K_X$ be a canonical divisor on $X$. There is a constant $M$ depending only on $X$ such that for every (possibly singular) rational curve $C\subseteq X$ we have $(C.K_X)\le M$.
\end{conjecture}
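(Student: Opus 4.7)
The plan is to stratify by the Kodaira dimension $\kappa(X)$, since the behaviour of rational curves and the sign of $K_X$ change drastically with $\kappa$. For $\kappa(X)=-\infty$, $X$ is birationally ruled, and a direct blow-up analysis on the minimal model yields the bound because only finitely many exceptional curves separate $X$ from $X_{\min}$. For $\kappa(X)=0$, the minimal model is abelian, bielliptic, K3, or Enriques and satisfies $K_{X_{\min}}\equiv 0$ numerically, so $(C.K_{X_{\min}})=0$ there and the rest is bookkeeping on the finitely many exceptional divisors introduced in reaching $X$. Neither of these cases is genuinely difficult, so I would dispatch them first.

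The case $\kappa(X)=1$ is the regime driving Conjecture \ref{Conj2}. Here the Iitaka fibration is an elliptic fibration $\pi\colon X\to B$ after passing to a relative minimal model, and Kodaira's canonical bundle formula writes $K_X=\pi^*M+\sum a_iF_i$, where $M$ is a divisor of positive degree on $B$ and the $F_i$ encode multiple and singular fibre contributions. Fibres have finitely many components by Kodaira's classification, so the additive sum contributes only boundedly to $(C.K_X)$ when $C$ is horizontal, while vertical rational curves are classified and contribute nothing uniformly. The leading term $(C.\pi^*M)=(\deg M)\cdot(C.F)$ is proportional to the degree of $C$ as a multisection, and bounding it is exactly Conjecture \ref{Conj2}. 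My attack here would exploit that $X$ is not uniruled (since $\kappa(X)\ge 0$), so by Miyaoka's generic semipositivity theorem $\Omega^1_X$ is generically nef; I would try to produce a symmetric differential whose zero locus contains every rational curve, reducing the problem to a Noetherian boundedness statement on a proper subscheme.

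The genuine obstacle is the general type case $\kappa(X)=2$. The natural tool is Bogomolov's theorem, which, under the hypothesis $c_1^2>c_2$, shows that rational curves on $X$ lie in a proper subscheme and satisfy a degree bound against $K_X$ extracted from a maximal destabilizing subsheaf of $\Omega^1_X$; this would give the desired $M$ essentially automatically. Without the hypothesis $c_1^2>c_2$ one must replace Bogomolov by orbifold and jet-bundle positivity in the style of Demailly, Lu--Yau, and Miyaoka, but the known results yield uniform bounds only under additional geometric assumptions. I expect any attempted proof to stall at precisely this step: producing a single constant $M=M(X)$ that bounds $(C.K_X)$ simultaneously for every rational $C$ on an arbitrary surface of general type is what is currently missing from the literature, and is likely to require genuinely new positivity input for the cotangent sheaf, which is also why the authors only extract from it the much weaker qualitative Conjecture \ref{ConjAdHoc} that they actually need.
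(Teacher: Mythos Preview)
The statement is labelled a \emph{conjecture} in the paper and is not proved there; the paper only records partial evidence. So there is no proof to match your proposal against, and your own sketch is, as you acknowledge, not a proof either.

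That said, your assessment of which cases are hard is inverted relative to the paper's. The paper treats $-K_X$ big and $K_X=0$ as trivial, in rough agreement with your $\kappa=-\infty$ and $\kappa=0$ remarks (though note $\kappa(X)=-\infty$ does not imply $-K_X$ is big, so your blow-up bookkeeping there is not quite the same reduction). For $\kappa(X)=2$ the paper simply observes that Lang's conjecture --- that a surface of general type contains only finitely many rational curves --- immediately gives the bound, since a maximum over a finite set exists. Your Bogomolov/jet-differential route is much more elaborate and, as you note, only works unconditionally under $c_1^2>c_2$; the reduction to Lang is cleaner and is the standard way to view this case, even if still conditional.

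Conversely, you treat $\kappa(X)=1$ as attackable via Miyaoka semipositivity and a hypothetical symmetric differential whose zero locus contains every rational curve. This is precisely the case that remains genuinely open: the elliptic-surface case is Conjecture~\ref{Conj2}, which the paper \emph{assumes} rather than proves, and the paper notes it is only known for very general elliptic surfaces. No known argument produces the required symmetric differential on an arbitrary elliptic surface of Kodaira dimension $1$, and your sketch does not supply one. So the substantive gap sits at $\kappa=1$, not at $\kappa=2$; your proposal misplaces the difficulty.
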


Evidence for this conjecture is discussed in \cite{Ulmer}. In particular, when $-K_X$ is big or $K_X=0$ the conjecture is trivial, and when $K_X$ is big (i.e.\ $X$ is of general type) the conjecture follows from a conjecture of Lang asserting that $X$ has at most finitely many rational curves.

Let $\pi\colon X\to \Pro^1$ be an elliptic surface with Kodaira dimension $\kappa(X)=1$. Then there is a positive integer $n\ge 1$ such that $K_X$ is linearly equivalent to $nF$ with $F$ a fibre of $\pi$. In this way we see that Conjecture \ref{ConjGeneralUlmer} implies Conjecture \ref{Conj2}.

It is worth pointing out that Conjecture \ref{Conj2} holds in a strong form for a very general elliptic surface $\pi\colon X\to \Pro^1$ over $\C$ with $\kappa(X)=1$. See \cite{Ulmer}.

For later reference, let us say that an elliptic surface $\pi\colon X\to \Pro^1$ defined over $\C$ has the \emph{Ulmer property with bound $M$} if for every rational curve $C\subseteq X$ not contained in a fibre of $\pi$ we have that $(C.F)<M$, where $F$ is a fibre of $\pi$. Then Conjecture \ref{Conj2} can be formulated by saying that $\pi\colon X\to \Pro^1$ has the Ulmer property for some bound $M$ whenever $\kappa(X)=1$.


\section{Elliptic curves}


\subsection{Root numbers and ranks} Let $E$ be an elliptic curve over $\Q$. For each place $v$ of $\Q$ one defines a \emph{local root number} $w_v(E)\in\{-1,1\}$ according to local data of $E$, and the (global) \emph{root number} is $w(E)=\prod_v w_v(E)$. The product is well-defined because $w_p(E)=1$ when $p$ is a prime of good reduction for $E$.

Let $L(E,s)$ be the $L$-function of $E$. By the modularity theorem \cite{Wiles, TaylorWiles, BCDT} this $L$-function is modular of weight $2$, hence it is entire and it satisfies a functional equation relating $s$ to $2-s$. More precisely, let $N$ be the conductor of $E$. Then the completed $L$-function
$$
\Lambda(E,s)=N^{s/2}(2\pi)^{-s}\Gamma(s)L(E,s)
$$
satisfies the functional equation
$$
\Lambda(E,2-s)=w(E)\cdot \Lambda(E,s)
$$
which gives an alternative characterization of the root number. In particular, the previous functional equation reveals that the integer
$$
\rho(E)=\ord_{s=1}L(E,s)
$$
is even if $w(E)=1$ and odd if $w(E)=-1$.

The Birch and Swinnerton-Dyer conjecture predicts that $\rho(E)=\rk E(\Q)$. In particular, one expects the following consequence:

\begin{conjecture}[Parity conjecture] We have that $\rk E(\Q)$ is even if $w(E)=1$ and it is odd if $w(E)=-1$.
\end{conjecture}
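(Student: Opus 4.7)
The plan is to derive the parity conjecture as a formal consequence of the rank part of the Birch and Swinnerton-Dyer conjecture together with the functional equation recalled in the paragraph immediately preceding the statement. Of course, since BSD is itself wide open, no genuinely unconditional proof is being proposed here; what follows is the standard implication that motivates why the parity conjecture is expected to hold.

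First, I would reduce the problem to a parity statement for the analytic rank. By BSD, $\rk E(\Q)=\rho(E)$, so it suffices to show that $\rho(E)$ is even exactly when $w(E)=1$ and odd exactly when $w(E)=-1$. Next, I would exploit the functional equation
\[
\Lambda(E,2-s)=w(E)\cdot \Lambda(E,s)
\]
to pin down the parity of the order of vanishing of $\Lambda(E,s)$ at the central point $s=1$. Setting $h(s)=\Lambda(E,1+s)$, the functional equation becomes $h(-s)=w(E)\cdot h(s)$, so $h$ is an even (resp.\ odd) entire function when $w(E)=1$ (resp.\ $w(E)=-1$). An even entire function that does not vanish identically has even order of vanishing at the origin, and similarly for the odd case, giving the desired parity for $\ord_{s=0}h(s)=\ord_{s=1}\Lambda(E,s)$.

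Finally, I would transfer this parity statement from $\Lambda(E,s)$ to $L(E,s)$. The completion factor $N^{s/2}(2\pi)^{-s}\Gamma(s)$ is holomorphic and nonzero at $s=1$, so $\ord_{s=1}\Lambda(E,s)=\ord_{s=1}L(E,s)=\rho(E)$. Combining the three steps yields the conclusion.

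The main obstacle is obviously the use of BSD in the first step, which is only known in very limited cases. If one wanted to make the argument unconditional, the natural route would be to replace BSD by a comparison between $\rk E(\Q)$ and a suitable $p$-Selmer rank (where parity results are known, for instance by Nekov\'a\v{r} and the Dokchitser brothers), at the cost of assuming finiteness of the $p$-primary part of $\Sha(E/\Q)$. Such refinements aside, the deduction from BSD plus the functional equation is entirely formal and is precisely the content suggested by the surrounding exposition.
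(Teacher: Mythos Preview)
Your derivation is exactly the motivation the paper gives: it records that the functional equation forces $\rho(E)$ to have the parity dictated by $w(E)$, and then states the Parity Conjecture as the formal consequence of the BSD prediction $\rk E(\Q)=\rho(E)$. Since this is a conjecture and the paper offers no proof beyond that implication, your proposal matches the paper's treatment.
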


What we need is an even weaker consequence:

\begin{conjecture}\label{ConjWeakRoot} If $w(E)=-1$ then $\rk E(\Q)>0$.
\end{conjecture}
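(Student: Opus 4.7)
The plan is a two-step reduction: first extract analytic information from $w(E)=-1$, then convert it into a statement about the Mordell--Weil rank. For the first step, I would invoke the functional equation recalled just above. By the modularity theorem the completed $L$-function $\Lambda(E,s)$ is entire and satisfies $\Lambda(E,2-s)=w(E)\Lambda(E,s)=-\Lambda(E,s)$. Setting $s=1$ forces $\Lambda(E,1)=0$, and since the gamma factor $(2\pi)^{-s}\Gamma(s)$ is holomorphic and non-vanishing at $s=1$, we conclude $L(E,1)=0$; moreover the sign of the functional equation forces $\rho(E)=\ord_{s=1}L(E,s)$ to be odd, hence $\rho(E)\ge 1$.

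For the second step, the cleanest route is to assume the Birch and Swinnerton-Dyer conjecture, which asserts $\rk E(\Q)=\rho(E)$; together with the first step this yields $\rk E(\Q)\ge 1$ immediately. Since only positivity (not equality) is needed, the much weaker parity conjecture (or even just the statement that $\rho(E)$ odd implies $\rk E(\Q)>0$) would suffice. I would therefore try to proceed in two cases according to the value of $\rho(E)$.

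In the case $\rho(E)=1$, I would invoke the unconditional theorem of Gross--Zagier and Kolyvagin: Gross--Zagier's height formula produces a Heegner point in $E(\Q)$ whose canonical height is a non-zero multiple of $L'(E,1)\ne 0$, hence a non-torsion rational point; Kolyvagin's Euler system then upgrades this to $\rk E(\Q)=1$. In the case $\rho(E)\ge 3$ (odd), no construction of rational points is known and one must rely on $p$-adic methods. Here I would appeal to the work of Nekov\'a\v{r} and of T.\ and V.\ Dokchitser on the $p$-parity conjecture, which gives $\rk E(\Q)\equiv \rho(E)\pmod 2$ conditional on the finiteness of the $p$-primary part of $\Sha(E/\Q)$ for some prime $p$; combined with the first step this again yields $\rk E(\Q)\ge 1$.

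The main obstacle is clearly the high analytic rank case $\rho(E)\ge 3$: there is at present no analogue of the Heegner point construction producing non-torsion rational points from higher-order vanishing of $L(E,s)$, so one is forced to rely either on full BSD or on the finiteness of $\Sha$ via the $p$-parity machinery. Short of a breakthrough on one of these conjectures, the statement genuinely seems to require an input of this kind, which is why Conjecture \ref{ConjWeakRoot} is left as a (very mild) conjectural hypothesis rather than proved outright.
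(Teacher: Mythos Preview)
The paper does not prove this statement: it is explicitly labeled a \emph{Conjecture} and is used as a hypothesis (via Lemma~\ref{LemmaImply}) rather than established. Your proposal is therefore not being compared against any argument in the paper, and your final paragraph correctly arrives at the same conclusion the authors reach implicitly---that beyond the analytic-rank-one case handled by Gross--Zagier and Kolyvagin, no unconditional argument is available, so the statement must remain a standing hypothesis. In that sense your write-up is consistent with the paper; just be aware that you were asked to ``prove'' something the paper itself deliberately leaves open.
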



\subsection{Equidistribution of root numbers}

In \cite{Helfgott}, Helfgott gives strong support for the following conjecture.

\begin{conjecture}[Averages of root numbers]\label{ConjAvg1} Let $\pi\colon X\to \Pro^1$ be an elliptic surface defined over $\Q$. Fix a choice of affine chart $\A^1\subseteq \Pro^1$ with affine coordinate $t$. Suppose that $\pi$ has a fibre of multiplicative reduction on $\A^1$. Then
$$
\lim_{X\to \infty}\frac{1}{X}{\sum}'_{1\le n\le X} w( X_n)=0
$$
where ${\sum} ' $ indicates that we only consider integers $n$ such that $\pi$ has a smooth fibre at $t=n$.
\end{conjecture}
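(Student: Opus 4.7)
The final statement is Helfgott's conjecture on cancellation of root numbers along integer fibres of an elliptic surface. Since this is a major open problem for which Helfgott's thesis provides conditional proofs and strong evidence rather than an unconditional argument, any proof proposal must follow his framework: decompose $w(X_n)$ into local factors and extract cancellation from the quadratic characters attached to multiplicative fibres.

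Fix a Weierstrass model of $\pi$ over the given affine chart, with coefficients in $\Z[t]$ and discriminant $\Delta(t)\in\Z[t]$. Write $w(X_n)=\prod_v w_v(X_n)$ over the places of $\Q$; for each $n$ with $X_n$ smooth, $w_p(X_n)=+1$ at every $p\nmid\Delta(n)$. By the formulas of Rohrlich and Halberstadt, each remaining $w_p(X_n)$ is determined by the Kodaira type of $X_n$ at $p$ together with congruence data of $c_4(n), c_6(n), \Delta(n)$ modulo a bounded power of $p$. The decisive point is that at a prime of multiplicative reduction one has $w_p(X_n)=-\left(\frac{-c_6(n)}{p}\right)$, which genuinely oscillates with $n$ and provides the only available cancellation.

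With this in hand, the plan has three steps. First, isolate the divisor $D(t)\mid \Delta(t)$ whose prime divisors produce multiplicative fibres of $X_n$ (non-constant by hypothesis) and write
$$
w(X_n)=\epsilon_\infty\cdot f(n)\cdot g(n),
$$
where $\epsilon_\infty=\pm 1$ is the archimedean contribution, $f(n)=\prod_{p\mid D(n)}\left(\frac{-c_6(n)}{p}\right)$ gathers the multiplicative local root numbers, and $g(n)$ absorbs additive-prime contributions, repeated prime factors of $\Delta(n)$, and exceptional congruences. Second, show that $g(n)$ is constant on a subset of $[1,X]$ of density $1$, by controlling how often $\Delta(n)$ has large square factors and how often exceptional congruence classes occur. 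Third, establish $\sum_{n\le X} f(n)=o(X)$ by expanding $f$ as a product of real characters indexed by the irreducible factors of $D(t)$ and invoking large-sieve inequalities together with Weil bounds for complete character sums.

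The main obstacle is step two: bounding the number of $n\le X$ for which $\Delta(n)$ has a square factor above a small threshold. For $\deg \Delta \le 3$ this is Hooley's theorem; in general it is known conditionally on the $abc$ conjecture but open unconditionally, and is precisely the squarefree sieve that Helfgott assumes. Step three, by contrast, is tractable once $f$ is realized as an average of real characters and handled by orthogonality together with standard multiplicative-function techniques. In this way Conjecture \ref{ConjAvg1} is reduced, along Helfgott's lines, to well-studied but still open squarefree-sieve statements for values of integer polynomials.
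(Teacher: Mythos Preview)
The statement you were asked to address is stated in the paper as a \emph{conjecture}, not a theorem: the paper gives no proof, only the remark that Helfgott has established it conditionally on certain analytic-number-theory hypotheses and unconditionally for some families. There is therefore no ``paper's own proof'' to compare against.

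Your proposal is honest about this: you correctly identify the statement as open and sketch Helfgott's framework (local decomposition of $w(X_n)$, separation into a genuinely oscillating multiplicative-reduction factor $f(n)$ and a tame factor $g(n)$, then a squarefree-sieve obstruction). That sketch is accurate as an outline of the known conditional approach, and your identification of the squarefree-sieve for polynomial values as the genuine bottleneck is exactly the point the paper alludes to when it says the relevant analytic hypotheses ``concern the multiplicative structure of polynomial values.'' So as a summary of the state of the art your write-up is fine, but it is not a proof and does not claim to be one; it ends by reducing the conjecture to other open statements. In short: there is no gap to flag because there is no proof to check---the paper and you agree that this is conjectural.
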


Indeed, Helfgott \cite{Helfgott} shows that if certain conjectures in analytic number theory hold, then a strong version of Conjecture \ref{ConjAvg1} holds. The aforementioned conjectures in analytic number theory concern the multiplicative structure of polynomial values and are known in some cases, which allowed Helfgott to prove Conjecture \ref{ConjAvg1} for several elliptic surfaces.

Let us reformulate Conjecture \ref{ConjAvg1} in an equivalent form which will be more convenient for us.

\begin{conjecture}\label{ConjAvg2} Let $\pi\colon X\to \Pro^1$ be an elliptic surface defined over $\Q$. Fix a choice of affine chart $\A^1\subseteq \Pro^1$ with affine coordinate $t$. Suppose that $\pi$ has a fibre of multiplicative reduction on $\A^1$. For $\epsilon\in \{-1,1\}$ define
$$
S_\epsilon =\{n\in \N : \pi \mbox{ has a smooth fibre at }t=n\mbox{ and }w(X_n)=\epsilon\}\subseteq \N.
$$
Then both $S_{-1}$ and $S_1$ have natural density $1/2$ in $\N$.
\end{conjecture}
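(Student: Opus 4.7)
The plan is to derive Conjecture \ref{ConjAvg2} directly from Conjecture \ref{ConjAvg1} via a mechanical rewriting of the root-number average as a counting function. First, since $\pi$ is an elliptic surface, its smooth locus $U_\pi\subseteq\Pro^1$ is the complement of finitely many points, so only finitely many positive integers $n$ yield a singular fibre $X_n$. Writing $N'(X)$ for the number of $1\le n\le X$ at which $\pi$ has a smooth fibre, we have $N'(X)=X+O(1)$, and in particular $N'(X)/X\to 1$ as $X\to\infty$.

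Next, for each $\varepsilon\in\{-1,+1\}$ and each smooth fibre $X_n$, the elementary identity
$$
\mathbf{1}_{\{w(X_n)=\varepsilon\}} = \frac{1+\varepsilon\,w(X_n)}{2}
$$
lets me express the counting function of $S_\varepsilon$ as
$$
\#\bigl(S_\varepsilon\cap[1,X]\bigr) \;=\; \frac{1}{2}\left(N'(X) \;+\; \varepsilon\cdot {\sum}'_{1\le n\le X} w(X_n)\right).
$$
Dividing by $X$ and invoking Conjecture \ref{ConjAvg1} (which asserts that $X^{-1}{\sum}'_{1\le n\le X}w(X_n)\to 0$), together with $N'(X)/X\to 1$, yields
$$
\frac{\#(S_\varepsilon\cap[1,X])}{X} \;\longrightarrow\; \frac{1}{2}
$$
for each $\varepsilon\in\{-1,+1\}$. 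This is exactly the statement that $S_{-1}$ and $S_{1}$ each have natural density $1/2$ in $\N$.

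There is essentially no technical obstacle: the substantive content is entirely carried by Conjecture \ref{ConjAvg1}. The only bookkeeping is to exclude the finitely many $n$ at which $X_n$ is singular, which contributes an $O(1)$ error absorbed into the limit, and to note that on smooth fibres the sets $S_{+1}$ and $S_{-1}$ partition the eligible indices so that the two density limits are consistent. The same identity, read in reverse, recovers Conjecture \ref{ConjAvg1} from Conjecture \ref{ConjAvg2}, justifying the paper's use of the word \emph{reformulation}.
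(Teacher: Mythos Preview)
Your argument is correct and is precisely the elementary computation the paper has in mind when it calls Conjecture~\ref{ConjAvg2} a \emph{reformulation} of Conjecture~\ref{ConjAvg1}; the paper gives no further details beyond that word. Your observation that the implication reverses by writing ${\sum}'_{1\le n\le X} w(X_n)=\#(S_{1}\cap[1,X])-\#(S_{-1}\cap[1,X])$ completes the equivalence exactly as intended.
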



\subsection{Positive rank fibres} \label{SecPositive}

Let us record here the following observation.

\begin{lemma}\label{LemmaImply} Conjectures \ref{ConjWeakRoot} and \ref{ConjAvg2} imply Conjecture \ref{Conj1}.
\end{lemma}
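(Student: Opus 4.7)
The plan is to show directly that the set of positive integers $n$ for which $X_n$ is an elliptic curve of positive rank contains (essentially) all of $S_{-1}\cap [1,x]$, and then use Conjecture \ref{ConjAvg2} to say this set has density $1/2$.

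More concretely, I would first fix the elliptic surface $\pi\colon X\to \Pro^1$ and the affine chart $\A^1\subseteq \Pro^1$ from the statement of Conjecture \ref{Conj1}, together with an affine coordinate $t$; the hypothesis that $\pi$ has a fibre of multiplicative type over some point of $\A^1$ is precisely what is required to invoke Conjecture \ref{ConjAvg2}. I would then form $S_{-1}=\{n\in\N : X_n \text{ is smooth and } w(X_n)=-1\}$. By Conjecture \ref{ConjAvg2}, the natural density of $S_{-1}$ in $\N$ is $1/2$, so there is an $x_0$ such that for all $x\ge x_0$ we have $\#(S_{-1}\cap[1,x])\ge x/3$ (the single integer $n=0$ is harmless).

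The key step is then the pointwise implication supplied by Conjecture \ref{ConjWeakRoot}: for every $n\in S_{-1}$ with $n\ge 1$, the fibre $X_n$ is an elliptic curve defined over $\Q$ with $w(X_n)=-1$, and hence $\rk X_n(\Q)>0$. Thus every element of $S_{-1}\cap[1,x]$ is counted by $N(x)$, so
\[
N(x)\ge \#(S_{-1}\cap[1,x])\ge \tfrac{1}{3}x
\]
for $x\ge x_0$. Taking $c=1/3$ (or any $c<1/2$ with the appropriate $x_0$) gives the conclusion of Conjecture \ref{Conj1}.

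The argument is essentially mechanical once the two conjectures are in hand, so there is no real obstacle; the only mild subtlety is the bookkeeping that $N(x)$ is defined in terms of positive integers with smooth fibre and positive rank, while $S_{-1}$ is defined as a subset of $\N$. Both conditions (smoothness and positivity of rank) are automatic on $S_{-1}\cap [1,x]$, the first by definition of $S_{-1}$ and the second by Conjecture \ref{ConjWeakRoot}, so the containment $S_{-1}\cap[1,x]\subseteq \{n : 1\le n\le x,\ X_n \text{ smooth},\ \rk X_n(\Q)>0\}$ is immediate.
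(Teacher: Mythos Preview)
Your argument is correct and is exactly the intended one; the paper in fact records this lemma as an observation without writing out a proof, since the implication is immediate once one unpacks the definitions of $S_{-1}$ and $N(x)$.
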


In view of the existing support for Conjecture \ref{ConjWeakRoot} coming from analytic number theory, and for Conjecture \ref{ConjAvg2} coming from the literature around the parity conjecture (and, more generally, the Birch and Swinnerton-Dyer conjecture), Lemma \ref{LemmaImply} gives strong evidence towards Conjecture \ref{Conj1}.


\section{Rational functions}


\subsection{Ramification of rational functions} For a field $k$ and a rational function $f\in k(t)$ we let $\deg(f)$ be the degree of the morphism $f\colon \Pro^1\to \Pro^1$.

Let $d\ge 1$. By writing a rational function as the quotient of two polynomials and considering the coefficients of these polynomials, we see that there is an (irreducible) algebraic variety $\Rat_d$ defined over $\Q$ of dimension $2d+1$ such that for every field $k$ of characteristic $0$ we have that the $k$-rational points of $\Rat_d$ naturally correspond to rational functions $f\in k(t)$ with $\deg(f)=d$. A first observation is:
\begin{lemma}\label{LemmaRat1} We have that $\Rat_d(\Q)$ is Zariski dense in $\Rat_d$. 
\end{lemma}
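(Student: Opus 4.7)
The plan is to realize $\Rat_d$ explicitly as a non-empty Zariski-open subvariety of $\Pro^{2d+1}$ over $\Q$, which reduces the claim to the standard density of $\Q$-rational points in projective space.

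More concretely, given any field $k$ of characteristic $0$, a rational function $f\in k(t)$ of degree $d$ has a representation $f=P/Q$ with $P,Q\in k[t]$ of degree $\le d$, $\max(\deg P,\deg Q)=d$, and $\gcd(P,Q)=1$; this representation is unique up to a common scaling by $k^\times$. Writing $P=\sum_{i=0}^d a_i t^i$ and $Q=\sum_{j=0}^d b_j t^j$ and passing to projective coordinates, I get an identification of $\Rat_d$ with a subset $U\subseteq\Pro^{2d+1}$. I would check that $U$ is Zariski-open by noting that its complement is cut out by two closed conditions: the failure of the degree condition, which amounts to $a_d=b_d=0$ (a linear subspace); and the failure of coprimality, which is the vanishing of the appropriate resultant of $P$ and $Q$ (a single homogeneous polynomial in the $a_i,b_j$). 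Clearly $U$ is non-empty, for instance it contains the class of $f(t)=t^d$.

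Now I would apply the standard fact that $\Pro^N(\Q)$ is Zariski-dense in $\Pro^N$ for every $N$: any proper Zariski-closed subvariety of $\Pro^N$ lies in a hypersurface, and one can always produce a $\Q$-point of $\Pro^N$ off any given hypersurface by exhibiting such points in a standard affine chart. Applying this with $N=2d+1$ and intersecting with the non-empty open subset $U$, we conclude that $U(\Q)=\Rat_d(\Q)$ is Zariski-dense in $U=\Rat_d$, since density is inherited by non-empty open subsets of an irreducible variety.

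The main thing to verify carefully is that the locus in $\Pro^{2d+1}$ corresponding to pairs $(P,Q)$ with $\gcd(P,Q)=1$ is genuinely cut out by a polynomial condition on the coefficients, taking into account the possibility that one of $P,Q$ has degree strictly less than $d$. This can be handled by working with the resultant of the homogenizations of $P$ and $Q$ in an auxiliary variable, or by a mild case analysis on the degrees; either way, the resulting locus is Zariski-open, so this is bookkeeping rather than a genuine obstacle. The substantive content of the lemma is just that rational points are Zariski-dense in projective space over $\Q$.
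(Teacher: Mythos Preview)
The paper states this lemma without proof, calling it ``a first observation'' after having already described $\Rat_d$ via coefficients of a pair of polynomials; your argument is exactly the natural way to make that observation rigorous, and it is correct.

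One small simplification: your two closed conditions on the complement can be merged into one. If you homogenize $P$ and $Q$ to degree-$d$ binary forms $\tilde P,\tilde Q$ in variables $s,t$, then a point of $\Pro^{2d+1}$ fails to lie in $\Rat_d$ precisely when $\tilde P$ and $\tilde Q$ share a common zero in $\Pro^1$: a common affine root witnesses $\gcd(P,Q)\ne 1$, while a common zero at $[1:0]$ is exactly the condition $a_d=b_d=0$. Thus the complement of $\Rat_d$ is the single hypersurface $\{\mathrm{Res}(\tilde P,\tilde Q)=0\}$, and the degree-drop bookkeeping you flagged disappears.
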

Let us assume that $k$ has characteristic $0$ and let $f\in k(t)$ be a non-constant rational function of degree $d$.   The rational function $f$ will be called \emph{mildly ramified} if it has $2d-2$ different branch points in $\Pro^1(k^{\alg})$. 
\begin{lemma}\label{LemmaRat2} The locus of all mildly ramified rational functions of degree $d$ in $\Rat_d$ is a non-empty Zariski open set.
\end{lemma}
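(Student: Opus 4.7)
The plan is to establish openness and non-emptiness separately.

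For openness, I would note that for $f = P/Q \in k(t)$ in lowest terms with $\max(\deg P, \deg Q) = d$, Riemann--Hurwitz forces the branch divisor $B_f \subseteq \Pro^1$ to have degree $2d-2$, and $f$ is mildly ramified precisely when $B_f$ is a reduced divisor. The assignment $f \mapsto B_f$ defines a morphism $\beta \colon \Rat_d \to \mathrm{Sym}^{2d-2}(\Pro^1) \simeq \Pro^{2d-2}$ whose coordinates are polynomials in the coefficients of $P$ and $Q$; concretely, the ramification scheme in the source $\Pro^1$ is cut out by the Wronskian-type polynomial $P'Q - PQ'$ (away from $\infty$), and the branch divisor is obtained by pushing forward. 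The locus of reduced divisors is the complement of the classical discriminant hypersurface in $\Pro^{2d-2}$, hence Zariski open, and so its preimage $\beta^{-1}(\text{reduced})$ is Zariski open in $\Rat_d$. This gives the openness claim.

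For non-emptiness, the plan is to invoke the Riemann existence theorem over $\C$. The case $d = 1$ is vacuous since $2d-2 = 0$. For $d \geq 2$, pick any $2d-2$ distinct points $p_1, \dots, p_{2d-2} \in \Pro^1(\C)$ together with transpositions $\tau_1, \dots, \tau_{2d-2} \in S_d$ whose product is the identity and which generate a transitive subgroup; for instance, the tuple $(1\,2),(1\,2),(1\,3),(1\,3),\dots,(1\,d),(1\,d)$, consisting of exactly $2d-2$ transpositions and generating $S_d$. Each transposition contributes $1$ to the ramification, and Riemann--Hurwitz forces the connected cover produced by Riemann existence to have genus $0$. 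Hence the cover is realized by a rational function $f \in \C(t)$ of degree $d$, branched exactly over the $p_i$ with simple ramification at each critical point, i.e., a mildly ramified function. This yields a $\C$-point of the open locus, proving non-emptiness.

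The most delicate step will be justifying that $\beta$ extends to a genuine morphism on the whole of $\Rat_d$, not just on the dense open locus where $\deg P = \deg Q = d$ and no critical point lies at $\infty$. This can be handled by working with affine charts on the source and target $\Pro^1$ and patching, or more intrinsically by using that any $f \in \Rat_d$ realizes $\Pro^1 \to \Pro^1$ as a finite flat morphism of degree $d$, so the discriminant ideal of the direct image $f_* \Ocal_{\Pro^1}$ provides a branch divisor depending algebraically on $f$. Either approach produces the morphism $\beta$ required for the openness argument, and combined with the Riemann existence construction above yields the lemma.
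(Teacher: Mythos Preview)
Your argument is correct. For openness, your construction of the branch-divisor morphism $\beta\colon \Rat_d\to \Pro^{2d-2}$ and pullback of the complement of the discriminant hypersurface is an explicit unpacking of what the paper compresses into the phrase ``using resultants one sees that the locus of rational functions that are not mildly ramified is Zariski closed''; indeed, forming $\mathrm{disc}_a(P(a)-bQ(a))$ as a binary form in $b$ and then testing whether that form has a repeated root is precisely a resultant computation, so the two openness arguments coincide in substance.

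For non-emptiness the approaches differ. The paper simply cites Goldberg's count of simply branched degree-$d$ covers of the sphere, whereas you build a mildly ramified map directly via the Riemann existence theorem with the monodromy tuple $(1\,2),(1\,2),(1\,3),(1\,3),\dots,(1\,d),(1\,d)$, checking transitivity, trivial product, and the Riemann--Hurwitz genus computation. Your route is self-contained and makes the existence completely explicit; the paper's route trades this for brevity by outsourcing to the literature. Either suffices for the only downstream use (Corollary~\ref{CoroExistencef}), which merely needs one mildly ramified $f\in\Q(t)$ of each degree---obtained from your $\C$-point together with Lemma~\ref{LemmaRat1}.
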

\begin{proof} Using resultants one sees that the locus of rational functions that are not mildly ramified is Zariski closed in $\Rat_d$. It is properly contained in $\Rat_d$ because there are plenty of mildly ramified maps over $\C$, see \cite{Goldberg}.
\end{proof}
The consequence of Lemmas \ref{LemmaRat1} and \ref{LemmaRat2} that will be relevant for us is the following.

\begin{corollary}\label{CoroExistencef} For each $d\ge 1$ there is a mildly ramified $f\in \Q(t)$ of degree $d$.
\end{corollary}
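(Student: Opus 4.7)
The plan is to combine the two preceding lemmas by a standard density-meets-open argument. Specifically, let $U_d \subseteq \Rat_d$ denote the locus of mildly ramified rational functions of degree $d$. By Lemma \ref{LemmaRat2}, $U_d$ is a non-empty Zariski open subset of $\Rat_d$. Since the variety $\Rat_d$ is irreducible (this was built into its definition), any non-empty Zariski open subset of $\Rat_d$ is automatically Zariski dense in $\Rat_d$, and hence its complement $\Rat_d \setminus U_d$ is a proper Zariski closed subset.

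Next, I would invoke Lemma \ref{LemmaRat1}, which says that $\Rat_d(\Q)$ is Zariski dense in $\Rat_d$. By the very definition of Zariski density, $\Rat_d(\Q)$ cannot be contained in any proper closed subset of $\Rat_d$, and in particular it is not contained in $\Rat_d \setminus U_d$. Therefore the intersection $U_d \cap \Rat_d(\Q)$ is non-empty.

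Finally, any point in $U_d \cap \Rat_d(\Q)$ corresponds, under the identification described before Lemma \ref{LemmaRat1}, to a rational function $f \in \Q(t)$ of degree $d$ that is mildly ramified, which is exactly what we needed to produce. No step here is substantial; the only thing worth double-checking is the completely elementary point that a Zariski dense subset of an irreducible variety meets every non-empty open subset, which I would state in one line rather than belabor.
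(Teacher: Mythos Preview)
Your argument is correct and is exactly the intended one: the paper presents the corollary as an immediate consequence of Lemmas \ref{LemmaRat1} and \ref{LemmaRat2}, and the density-meets-open reasoning you spell out is precisely how those two lemmas combine.
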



\subsection{Constructing curves of positive genus} 

\begin{lemma}\label{LemmaMildly} Let $k$ be a field of characteristic $0$ and let $r\ge 1$. Let $f,g\in k(t)$ having degrees $p=\deg(f)$ and $q=\deg(g)\ge 2$ with $p$ prime and $p\ge q+r+1$. Suppose that $f$ is mildly ramified. Then the equation $f(x)=g(y)$ defines a reduced and geometrically irreducible curve in $\A^2$ of geometric genus $\gfrak\ge r$.
\end{lemma}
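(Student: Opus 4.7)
My plan is to argue in two main parts: first establish geometric irreducibility (which in characteristic zero gives reducedness automatically), then lower-bound the geometric genus via Riemann--Hurwitz for the projection onto the $y$-axis.

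For geometric irreducibility, I would write $f = A(x)/B(x)$ and $g = C(y)/D(y)$ in lowest terms over $\bar k$, and let $F(x,y) = A(x)D(y) - B(x)C(y)$ be the defining polynomial. Introducing a ``base'' variable $s$, I view $\bar k(s) \hookrightarrow \bar k(x)$ via $s \mapsto f(x)$ (of degree $p$) and $\bar k(s) \hookrightarrow \bar k(y)$ via $s \mapsto g(y)$ (of degree $q$). The key point is that the $\bar k(y)$-algebra $\bar k(x) \otimes_{\bar k(s)} \bar k(y)$ has dimension $p$, so it decomposes as a product of fields whose dimensions over $\bar k(y)$ divide $p$ and sum to $p$; since $p$ is prime, each factor has dimension $1$ or $p$. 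If it were not a single field, every factor would have dimension $1$, giving an embedding $\bar k(x) \hookrightarrow \bar k(y)$ over $\bar k(s)$ and hence $p \mid q$, contradicting $p$ prime and $p > q$. This forces $A(x) - g(y)B(x)$ to be irreducible in $\bar k(y)[x]$; using $\gcd(A,B) = \gcd(C,D) = 1$ to exclude purely $x$- or purely $y$-factors, I conclude $F$ is irreducible in $\bar k[x,y]$, so the curve is geometrically integral, and in particular reduced.

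For the genus bound, let $\tilde C$ be the smooth projective model and consider $\pi_y \colon \tilde C \to \Pro^1$, a map of degree $p$. By Riemann--Hurwitz, $\gfrak = 1 - p + \deg R(\pi_y)/2$, so it suffices to show $\deg R(\pi_y) \ge 2(r + p - 1)$. The mild ramification assumption on $f$ gives $2p - 2$ distinct branch values $t_1^*, \dots, t_{2p-2}^*$, each with a unique preimage $x_i^*$ of ramification index $2$. At a point over $(x_i^*, y_0) \in C$ with $g(y_0) = t_i^*$, the local equation of the curve takes the form $(x - x_i^*)^2 = (y - y_0)^{e_g(y_0)}\cdot u$ for a local unit $u$; a direct parametrization shows that after normalization the contribution to $\deg R(\pi_y)$ equals $1$ when $e_g(y_0)$ is odd and $0$ when it is even. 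In particular, if $t_i^*$ is not a critical value of $g$, then every $y_0 \in g^{-1}(t_i^*)$ is unramified and this fiber contributes the full $q$ to $\deg R(\pi_y)$.

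I expect the main obstacle to be the ramification lost when some $t_i^*$ coincides with a critical value of $g$, since even-index points then give zero contribution. I would resolve this by the crude bookkeeping that $g$ has at most $2q - 2$ critical values (by Riemann--Hurwitz applied to $g$), so at least $(2p - 2) - (2q - 2) = 2p - 2q$ of the $t_i^*$ avoid them and contribute $q$ each. Hence $\deg R(\pi_y) \ge (2p - 2q)q$, and Riemann--Hurwitz yields
$$
\gfrak \;\ge\; 1 - p + (p - q)q \;=\; (q - 1)(p - q - 1).
$$
The hypotheses $q \ge 2$ and $p \ge q + r + 1$ then give $\gfrak \ge (q-1)\cdot r \ge r$, completing the proof.
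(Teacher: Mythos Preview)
Your approach matches the paper's: irreducibility via linear disjointness of $\bar k(x)$ and $\bar k(y)$ over $\bar k(s)$ (using $p$ prime and $p>q$), followed by Riemann--Hurwitz for the degree-$p$ projection to the $y$-line, discarding the at most $2q-2$ branch values of $f$ that could collide with those of $g$ to obtain $\deg R(\pi_y)\ge 2q(p-q)$ and hence $\gfrak\ge (q-1)(p-q-1)\ge r$; your local computation of the ramification contributions is more explicit than the paper's branch-point count, but yields the identical estimate.

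One step should be tightened: the assertion that the factor dimensions of $\bar k(x)\otimes_{\bar k(s)}\bar k(y)$ over $\bar k(y)$ \emph{divide} $p$ is not true in general (for two distinct roots of an irreducible quintic with Galois group $S_5$ over $\Q$, the factors have dimensions $1$ and $4$). The correct argument here is that each factor field contains an isomorphic copy of $\bar k(x)$, so its $\bar k(s)$-dimension is a multiple of $p$; since $\gcd(p,q)=1$, its $\bar k(y)$-dimension is then also a multiple of $p$, hence exactly $p$, and there is a single factor as you wanted.
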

\begin{proof} We may assume that $k$ is algebraically closed. Let $C$ be the fiber product of the morphisms $f,g\colon\Pro^1\to \Pro^1$, then the equation $f(x)=g(y)$ defines an affine birational model for $C$. We let $\pi_f,\pi_g\colon C\to \Pro^1$ be the corresponding projections of this fiber product satisfying $f\circ \pi_f=g\circ \pi_g$.

Since $p$ is prime the map $f$ is indecomposable, and since $p>q$ we see that the function field extensions of $k(t)=k(\Pro^1)$ induced by $f$ and $g$ are linearly disjoint. It follows that $C$ is reduced and geometrically irreducible, and that $\deg(\pi_f)=q$ and $\deg(\pi_g)=p$.

Let $\nu\colon\widetilde{C}\to C$ be the normalization, let $h=\pi_g\circ \nu\colon\widetilde{C}\to \Pro^1$ and note that $\deg(h)=\deg(\pi_g)=p$. The maps that we have constructed so far are summarized in the following commutative diagram:

$$
\begin{tikzcd}
\widetilde{C} \arrow{rd}{h} \arrow{r}{\nu} & C \arrow{d}{\pi_g} \arrow{r}{\pi_f}& \Pro^1 \arrow{d}{f} \\
  & \Pro^1 \arrow{r}{g} & \Pro^1  .
\end{tikzcd}
$$

Since $f$ has $2p-2$ branch points and $g$ has at most $2q-2$ branch points, we have that $g$ is unramified over at least $2(p-q)$ branch points of $f$. Hence, $h$ has at least $2q(p-q)$ branch points. 

The genus of $\widetilde{C}$ is $\gfrak$. Applying the Riemann-Hurwitz formula to $h$ we deduce 
$$
2\gfrak-2 \ge  -2p + 2q(p-q).
$$ 
Hence, 
$$
\begin{aligned}
\gfrak&\ge q(p-q)-p+1 \\
&= p(q-1)-q^2+1 \\
&\ge (q+r+1)(q-1)-q^2+1 \\
&= rq-r\ge r.
\end{aligned}
$$
\end{proof}


\section{Convenient elliptic surfaces} \label{SecConvenientSurf}


\subsection{Preserving rank $0$}

\begin{lemma}\label{LemmaErk0} Let $\pi\colon X\to \Pro^1$ be an elliptic surface over $\C$ such that $X$ contains only finitely many rational curves. Choose an affine chart $\A^1\subseteq \Pro^1$ and let $t$ be an affine coordinate on $\A^1$. Choose a Weierstrass equation
$$
y^2=x^3+A(t)x+B(t)
$$
for $\pi\colon X\to \Pro^1$ with $A,B\in \C[t]$. Let $f(z)\in \C(z)$ be a non-constant rational function and consider the Weierstrass equation over $\C(z)$
$$
y^2=x^3+A(f(z))x+B(f(z)).
$$
Then this Weierstrass equation defines an elliptic curve $E_f$ over $\C(z)$ with Mordell-Weil rank $0$.
\end{lemma}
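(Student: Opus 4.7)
The strategy is to identify the Mordell--Weil group $E_f(\C(z))$ directly with a finite set of morphisms $\Pro^1_z \to X$, using the finiteness of rational curves in $X$.

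First, I would identify $E_f(\C(z))$ with the set of morphisms $\phi\colon \Pro^1_z \to X$ satisfying $\pi\circ \phi = f$. A $\C(z)$-point of $E_f = X_\eta \otimes_{\C(t)} \C(z)$ is, by adjunction, a $\C(t)$-morphism $\Spec \C(z) \to X_\eta \subseteq X$, hence a rational map $\Pro^1_z \dashrightarrow X$ with $\pi\circ\phi=f$ on the generic point. Since $\Pro^1_z$ is a smooth projective curve and $X$ is projective, this rational map extends uniquely to a morphism $\phi\colon \Pro^1_z \to X$, and the relation $\pi\circ\phi = f$ then holds identically. Conversely, any such $\phi$ restricts at the generic point to a $\C(z)$-point of $E_f$. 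Thus, it suffices to prove that only finitely many such morphisms $\phi$ exist.

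Second, any such $\phi$ is non-constant because $\pi\circ \phi = f$ is non-constant, so its image $C_\phi\subseteq X$ is a (possibly singular) rational curve not contained in any fiber of $\pi$. By hypothesis, $X$ has only finitely many rational curves, say $C_1,\dots,C_k$, so $C_\phi$ belongs to this finite set.

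Third, for each $C_i$ let $\nu_i\colon \widetilde{C_i}\to C_i$ be the normalization, with $\widetilde{C_i}\cong \Pro^1$, and set $g_i:=\pi\circ \nu_i\colon \widetilde{C_i}\to \Pro^1$, a finite non-constant morphism of some degree $d_i\ge 1$. Every $\phi$ with image $C_i$ factors uniquely as $\phi=\nu_i\circ \tilde\phi$ with $\tilde\phi\colon \Pro^1_z \to \widetilde{C_i}$ satisfying $g_i\circ \tilde\phi = f$. Passing to function fields, such $\tilde\phi$ correspond bijectively to $\C(z)$-algebra homomorphisms $\C(\widetilde{C_i}) \otimes_{\C(t)} \C(z) \to \C(z)$, whose number is bounded by $\dim_{\C(z)}\bigl(\C(\widetilde{C_i})\otimes_{\C(t)} \C(z)\bigr)=d_i$. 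Summing over $i$ yields a finite total for the number of $\phi$, so $E_f(\C(z))$ is finite and in particular has rank zero. The main subtlety I expect is the extension of a $\C(z)$-point of $E_f$ to an honest morphism $\Pro^1_z \to X$ rather than merely a rational map; once this (standard) identification with such $\phi$'s is set up, the geometric hypothesis on rational curves closes the argument with an essentially elementary counting step.
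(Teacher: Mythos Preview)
Your proposal is correct and follows essentially the same approach as the paper: points of $E_f(\C(z))$ are realized as morphisms $\Pro^1\to X$ lying over $f$, their images are rational curves in $X$, and each of the finitely many rational curves in $X$ can account for only finitely many such morphisms. The paper phrases this via sections of the base-changed elliptic surface $X'\to\Pro^1$ and bounds the multiplicity by $\deg(f)$, whereas you bypass $X'$ and bound the multiplicity by $d_i=\deg(\pi\circ\nu_i)$ via the normalization; since $d_i\mid\deg(f)$ your bound is in fact sharper, but the argument is the same.
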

\begin{proof} Let $\pi'\colon X'\to \Pro^1$ be the elliptic surface associated to $E_f$, then $\pi'\colon X'\to \Pro^1$ is obtained from $\pi\colon X\to \Pro^1$ by base change by $f\colon \Pro^1\to \Pro^1$. Note that $\C(z)$-rational points of $E_f$ yield sections of $\pi'$ whose image in $X$ via the map $X'\to X$ are rational curves. Each rational curve on $X$ obtained by this procedure can come from at most finitely many points of $E_f(\C(z))$ (in fact, at most $\deg(f)$ of them), hence the result. 
\end{proof}


\subsection{An auxiliary elliptic surface}

From Lemma \ref{LemmaErk0} we deduce:
\begin{lemma} Suppose that Conjecture \ref{ConjAdHoc} holds. There is an elliptic surface $\pi\colon X\to \Pro^1$ defined over $\Q$ such that if we choose an affine chart $\A^1\subseteq \Pro^1$ with affine coordinate $t$, and if we choose a Weierstrass equation
$$
y^2=x^3+A(t)x+B(t)
$$
with $A,B\in \Q[t]$, then the following holds:
\begin{itemize}
\item[(i)] $\pi$ has a multiplicative fibre over $\A^1$;
\item[(ii)] For every non-constant $f\in \Q(z)$, the Weierstrass equation 
$$
y^2=x^3+A(f(z))x+B(f(z))
$$
defines an elliptic curve $E_f$ over $\Q(z)$ with Mordell-Weil rank $0$.
\end{itemize}
\end{lemma}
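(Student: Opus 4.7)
The plan is to combine Conjecture \ref{ConjAdHoc} with Lemma \ref{LemmaErk0} directly; no new ingredients are required beyond a minor bookkeeping step about choosing the affine chart.

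First I would invoke Conjecture \ref{ConjAdHoc} to obtain an elliptic surface $\pi\colon X\to \Pro^1$ defined over $\Q$ such that $\pi$ has a multiplicative fibre over some point $b\in \Pro^1(\C)$ and $X$ contains only finitely many (possibly singular) rational curves defined over $\C$. This is the surface we claim works.

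Next, to arrange condition (i), I would select an affine chart $\A^1\subseteq \Pro^1$ defined over $\Q$ whose complementary point is a $\Q$-rational point of $\Pro^1$ distinct from $b$; such a chart exists because $\Pro^1(\Q)$ is infinite while $\{b\}$ is a single geometric point. With this choice, the multiplicative fibre of $\pi$ lies over a point $t=c\in\A^1(\C)$, which gives (i). A short Weierstrass equation $y^2=x^3+A(t)x+B(t)$ with $A,B\in \Q[t]$ for the generic fibre $X_\eta$ then exists by the standard procedure in characteristic zero.

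For condition (ii), let $f\in \Q(z)$ be non-constant and view it as an element of $\C(z)$ via the inclusion $\Q(z)\subseteq \C(z)$. After base changing $\pi\colon X\to \Pro^1$ from $\Q$ to $\C$, we obtain an elliptic surface over $\C$ satisfying the hypotheses of Lemma \ref{LemmaErk0}, with the same short Weierstrass equation since $A,B\in \Q[t]\subseteq \C[t]$. Applying Lemma \ref{LemmaErk0} to this complex surface and to $f$ yields that $E_f$ has Mordell-Weil rank $0$ over $\C(z)$. Because the natural map $E_f(\Q(z))\hookrightarrow E_f(\C(z))$ is an injective homomorphism of abelian groups, it follows that $E_f(\Q(z))$ also has rank $0$, establishing (ii).

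There is essentially no obstacle here: the substance of the statement has been absorbed into Conjecture \ref{ConjAdHoc} and Lemma \ref{LemmaErk0}. The only care required is in the choice of affine chart, so that the multiplicative fibre, which the conjecture only locates at some $\C$-point of $\Pro^1$, is not accidentally placed at the point at infinity of the chart; flexibility in choosing the chart over $\Q$ makes this automatic.
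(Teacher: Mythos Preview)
Your proposal is correct and matches the paper's approach exactly: the paper simply writes ``From Lemma \ref{LemmaErk0} we deduce'' and states the lemma, so your write-up just fills in the obvious details (the affine-chart bookkeeping and the passage from rank $0$ over $\C(z)$ to rank $0$ over $\Q(z)$) that the paper leaves implicit.
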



\section{The Legendre family} \label{SecLegendre}


\subsection{$n$-th power base change} The \emph{Legendre family} is the elliptic surface determined over $\C$ by the  Weierstrass equation
$$
y^2=x(x+1)(x+t)
$$
where $t$ is an affine coordinate on $\A^1\subseteq \Pro^1$. We denote it by $\pi_1\colon X^{(1)}\to \Pro^1$. It is non-isotrivial, i.e.\ the $j$-invariant is not constant. 

 Let $d\ge 1$. Consider the $d$-th power map $\tau_d\colon\Pro^1\to \Pro^1$ given by $\tau_d(t)=t^d$. Let $\pi_d\colon X^{(d)}\to \Pro^1$ be the elliptic surface obtained from $\pi_1\colon X^{(1)}\to \Pro^1$ by base change by $\tau_d$. This elliptic surface admits the Weierstrass equation
$$
y^2=x(x+1)(x+t^d).
$$
The elliptic surface $\pi_d\colon X^{(d)}\to \Pro^1$ is studied in detail in \cite{UlmerLegendre}; let us recall two basic facts that will be useful for us.

\begin{lemma}[cf.\ Section 7 in \cite{UlmerLegendre}]\label{LemmaBadFibres} The elliptic surface $\pi_d\colon X^{(d)}\to \Pro^1$ has a multiplicative fibre of type $I_{2d}$ at $t=0$ and a multiplicative fibre of type $I_2$ at each root of $t^d-1$. At $t=\infty$ the fibre is additive of type $I_{2d}^*$ if $d$ is odd, and multiplicative of type $I_{2d}$ if $d$ is even. There is no other singular fibre.
\end{lemma}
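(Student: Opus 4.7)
The plan is to apply Tate's algorithm (equivalently, Kodaira's classification read off from the triple $(v(c_4), v(c_6), v(\Delta))$) directly to the Weierstrass equation $y^2 = x(x+1)(x+t^d)$. The first step is to compute the discriminant: since the cubic in $x$ has roots $0, -1, -t^d$, the discriminant of the Weierstrass equation equals $16\, t^{2d}(t^d - 1)^2$ up to units. On $\A^1$ this vanishes only at $t = 0$ (to order $2d$) and at each $d$-th root of unity (to order $2$), so these are the only singular fibres over $\A^1$; it remains to identify their Kodaira types and to analyse the fibre at $t = \infty$.

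Second, I would identify the two types of fibres on $\A^1$. At $t = 0$ the fibre is the nodal cubic $y^2 = x^2(x+1)$ with distinct tangent directions $y = \pm x$ at the origin, so the reduction is multiplicative; combined with $v_0(\Delta) = 2d$ this yields type $I_{2d}$. At a $d$-th root of unity $\zeta$ the fibre is $y^2 = x(x+1)^2$, nodal at $(-1,0)$ with tangents $y = \pm i(x+1)$, so the reduction is multiplicative and $v_\zeta(\Delta) = 2$ gives type $I_2$.

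Third, I would analyse the fibre at $t = \infty$ via $s = 1/t$. Starting from $y^2 = x^3 + (1 + t^d)x^2 + t^d x$, I would rescale $x = X/s^{2k}$, $y = Y/s^{3k}$ and multiply through by $s^{6k}$ to land in $\C[s]$, choosing $k$ so that the resulting equation is minimal at $s = 0$. For $d = 2e$ even, taking $k = e$ produces the equation $Y^2 = X(X+1)(X + s^d)$, which has the same shape as the fibre at $t = 0$ and therefore type $I_{2d}$. For $d$ odd, taking $k = (d+1)/2$ (an integer) and completing the short computation yields $Y^2 = X(X+s)(X+s^{d+1})$, whose discriminant at $s = 0$ has valuation $2d + 6$; from $a_2 = s + s^{d+1}$ and $a_4 = s^{d+2}$ one finds $v_0(c_4) = 2$ and $v_0(c_6) = 3$, which certifies minimality and identifies the Kodaira type as $I_{2d}^*$.

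The main obstacle is the $d$ odd case at $t = \infty$: finding the correct rescaling that produces a minimal Weierstrass equation at $s = 0$, recognising that the transformed cubic factors as $X(X+s)(X+s^{d+1})$, and then carrying out the valuation computations for $c_4$ and $c_6$ so that Kodaira's tables select $I_{2d}^*$ rather than some other additive type. As a sanity check one can use Lemma \ref{LemmaTopE} to verify that $e(X^{(d)})$ equals $6d$ when $d$ is even and $6(d+1)$ when $d$ is odd, both of which are multiples of $12$ as required by Noether's formula.
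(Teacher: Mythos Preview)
Your computation is correct. The paper does not give its own proof of this lemma; it simply cites Section~7 of \cite{UlmerLegendre}, so there is no in-paper argument to compare against. Your direct application of Tate's algorithm---computing $\Delta = 16\,t^{2d}(t^d-1)^2$, reading off the nodal fibres at $t=0$ and at the $d$-th roots of unity, and handling $t=\infty$ via the rescaling $x = X/s^{2k}$, $y = Y/s^{3k}$ with $k=d/2$ (even case) or $k=(d+1)/2$ (odd case)---is exactly the standard route and matches what one finds in Ulmer's paper. The factorisation $Y^2 = X(X+s)(X+s^{d+1})$ in the odd case and the valuations $(v(c_4),v(c_6),v(\Delta))=(2,3,2d+6)$ correctly pin down $I_{2d}^*$, and your Euler-characteristic cross-check via Lemma~\ref{LemmaTopE} is a nice confirmation.
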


\begin{lemma}[cf.\ Corollary 11.3 in \cite{UlmerLegendre}]\label{LemmaRk0} For each $d\ge 1$, the group of sections of $\pi_d\colon X^{(d)}\to \Pro^1$ is finite.
\end{lemma}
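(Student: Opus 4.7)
My plan is to compute the Mordell--Weil rank of $\pi_d$ via the Shioda--Tate formula and show it vanishes; since $\MW(\pi_d)$ is finitely generated, rank zero gives the desired finiteness. Recall that for an elliptic surface $\pi\colon X\to \Pro^1$ with section over $\C$, the Shioda--Tate formula reads
$$
\rho(X) \;=\; 2 + \rk \MW(\pi) + \sum_{v\in\Pro^1}(m_v - 1),
$$
where $\rho(X)$ is the Picard number and $m_v$ is the number of irreducible components of the fibre $X_v$.

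First I would read off the local contributions from Lemma~\ref{LemmaBadFibres}: the fibre $I_{2d}$ over $t=0$ contributes $2d-1$; the $d$ fibres of type $I_2$ at the roots of $t^d-1$ contribute $d$ in total; and the fibre over $t=\infty$ contributes $2d+4$ when $d$ is odd (type $I_{2d}^*$, with $2d+5$ components) or $2d-1$ when $d$ is even. Hence the local sum equals $5d+3$ in the odd case and $5d-2$ in the even case. Using Lemma~\ref{LemmaTopE}, the same bad fibres yield $e(X^{(d)})=6d+6$ for $d$ odd and $6d$ for $d$ even. Since $X^{(d)}$ is an elliptic surface with section over $\Pro^1$ its irregularity vanishes, and Noether's formula together with $K^2=0$ gives $\chi(\Ocal_{X^{(d)}})=e/12$ and
$$
h^{1,1}(X^{(d)}) \;=\; b_2 - 2(\chi-1) \;=\; e - 2\chi,
$$
which equals $5d+5$ for $d$ odd and $5d$ for $d$ even. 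Comparing with the local sums, the Shioda--Tate identity forces $\rk \MW(\pi_d)=0$ exactly when the Picard number $\rho(X^{(d)})$ attains its maximum possible value $h^{1,1}(X^{(d)})$.

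The main obstacle is therefore to establish that $X^{(d)}$ has maximal Picard number. The approach I would take is to exhibit $X^{(d)}$ as dominated by (a quotient of) a product of curves, in the spirit of Shioda's classical treatment of Fermat surfaces, so that the Hodge conjecture for divisors holds on $X^{(d)}$ and hence $\rho(X^{(d)})=h^{1,1}(X^{(d)})$. Concretely, the Weierstrass form $y^2=x(x+1)(x+t^d)$ factors naturally through intermediate curves $\{u^2=v(v+1)\}$ and $\{w^d=v+1\}$ after clearing denominators and allowing passage to a suitable birational model, giving the required domination. As a self-contained backup path should the Fermat-type domination prove delicate to set up cleanly, I would instead carry out a complete $2$-descent on $E_d\colon y^2=x(x+1)(x+t^d)$ over $\C(t)$, exploiting that the full $2$-torsion is $\C(t)$-rational, so that the $2$-Selmer group embeds into the subgroup of $\C(t)^\times/(\C(t)^\times)^2$ generated by the irreducible factors of $t$ and of $t^d-1$; a combinatorial bound on this subgroup should then match the $2$-torsion contribution alone, forcing rank zero.
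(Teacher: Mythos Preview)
The paper does not give its own proof of this lemma: it simply cites Corollary~11.3 of \cite{UlmerLegendre}. So there is no in-paper argument to compare against, and I will assess your proposal on its own merits. Your Shioda--Tate bookkeeping is correct, and you have correctly reduced the statement to showing that $\rho(X^{(d)})=h^{1,1}(X^{(d)})$. The problem is the step ``the Hodge conjecture for divisors holds on $X^{(d)}$, hence $\rho=h^{1,1}$''. The Hodge conjecture for divisors is the Lefschetz $(1,1)$ theorem, and it holds for \emph{every} smooth projective surface; it says that $\rho$ equals the dimension of the space of \emph{rational} $(1,1)$-classes, not that $\rho=h^{1,1}$. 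The gap $h^{1,1}-\rho$ is the $(1,1)$-part of the transcendental lattice, which is typically positive once $p_g>0$ (here, once $d\ge 3$). Domination by a product of curves is the standard input for the \emph{Tate} conjecture in positive characteristic, and that is indeed part of Ulmer's toolkit there; over $\C$ it does not by itself force $\rho=h^{1,1}$. What is actually needed over $\C$ is an explicit computation that the transcendental Hodge structure has no $(1,1)$-piece (for example by decomposing everything under the $\mu_d$-action $t\mapsto\zeta t$), and that is real work, not a formal consequence of any conjecture. Worse, the specific curves you write down, $\{u^2=v(v+1)\}$ and $\{w^d=v+1\}$, are both of genus $0$; their product is a rational surface, and a dominant rational map from it to $X^{(d)}$ would make $X^{(d)}$ unirational, hence (by Castelnuovo) $\kappa(X^{(d)})=-\infty$, contradicting Corollary~\ref{Corok1} for $d\ge 5$. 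So that particular domination cannot exist.

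The backup $2$-descent is not wrong in principle, but as sketched it only bounds the Selmer group inside an $\F_2$-vector space whose dimension grows linearly in $d$ (there are $d+2$ bad places: $t=0$, the $d$ roots of $t^d=1$, and $\infty$), whereas you need the image of $E_d(\C(t))/2E_d(\C(t))$ to have dimension exactly $2$. Cutting the Selmer group down to that size requires imposing all the local conditions and carrying out the descent honestly for arbitrary $d$, which is substantially more than the ``combinatorial bound'' you describe.
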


\subsection{Kodaira dimension}

Recall (cf.\ Section \ref{SecEll}) that for an elliptic surface the multiplicative fibres of type $I_n$ have $n$ irreducible components, while the additive fibres of type $I_n^*$ have $n+5$ irreducible components. From this and Lemmas \ref{LemmaTopE} and \ref{LemmaBadFibres} we obtain a formula for the topological Euler characteristic of $X^{(d)}$.

\begin{lemma} The topological Euler characteristic of $X^{(d)}$ is
$$
e(X^{(d)})=\begin{cases}
6d & \mbox{ if $d$ is even};\\
6(d+1) & \mbox{ if $d$ is odd}.
\end{cases}
$$
\end{lemma}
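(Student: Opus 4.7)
The plan is a direct application of Lemma \ref{LemmaTopE} to the list of singular fibres given in Lemma \ref{LemmaBadFibres}. Since smooth fibres contribute $0$ to the sum $e(X^{(d)})=\sum_{b\in\Pro^1} e_b(X^{(d)})$, I only need to account for the finitely many singular fibres.

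First I would record the component counts. A fibre of type $I_n$ has $n$ irreducible components and is multiplicative, so it contributes $n$ to the Euler characteristic; a fibre of type $I_n^*$ has $n+5$ irreducible components and is additive, so it contributes $(n+5)+1=n+6$. Then I would read off from Lemma \ref{LemmaBadFibres} that the singular fibres of $\pi_d$ are as follows: at $t=0$ we have $I_{2d}$ contributing $2d$; at each of the $d$ roots of $t^d-1$ we have $I_2$ contributing $2$, totalling $2d$; and at $t=\infty$ the contribution depends on parity.

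Summing these:
\begin{itemize}
\item If $d$ is even, the fibre at $\infty$ is $I_{2d}$, contributing $2d$, so
$$e(X^{(d)})=2d+2d+2d=6d.$$
\item If $d$ is odd, the fibre at $\infty$ is $I_{2d}^*$, contributing $2d+6$, so
$$e(X^{(d)})=2d+2d+(2d+6)=6(d+1).$$
\end{itemize}
This yields the stated formula.

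There is no real obstacle here: the proof is a bookkeeping exercise once Lemma \ref{LemmaBadFibres} is in hand, and the only thing to be careful about is remembering the extra $+1$ in the contribution of additive fibres and the fact that $I_n^*$ has $n+5$ (not $n$) components. Both are part of the setup in Section \ref{SecEll}.
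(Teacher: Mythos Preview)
Your proof is correct and follows exactly the approach the paper takes: the paper simply states that the formula follows from Lemmas \ref{LemmaTopE} and \ref{LemmaBadFibres} together with the component counts for $I_n$ and $I_n^*$ fibres, and your argument spells out that bookkeeping in full.
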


In particular, from Lemma \ref{LemmaKdim} we obtain

\begin{corollary}\label{Corok1} The Kodaira dimension of $X^{(d)}$ is
$$
\kappa(X^{(d)})=\begin{cases}
-\infty & \mbox{ if }d=1,2;\\
0 & \mbox{ if }d=3,4;\\
1 & \mbox{ if }d\ge 5.
\end{cases}
$$
\end{corollary}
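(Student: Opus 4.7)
The plan is essentially an arithmetic bookkeeping exercise combining the previous Euler characteristic computation with Lemma \ref{LemmaKdim}. Recall that Lemma \ref{LemmaKdim} determines $\kappa(X^{(d)})$ purely from the ratio $e(X^{(d)})/12$: it equals $-\infty,0,1$ according to whether this ratio is $1$, $2$, or at least $3$.

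First, I would tabulate $e(X^{(d)})$ for the small values $d=1,2,3,4$ using the preceding lemma. For $d=1$ (odd), we get $e=6(1+1)=12$, so $e/12=1$ and hence $\kappa=-\infty$; for $d=2$ (even), $e=6\cdot 2=12$, giving $\kappa=-\infty$; for $d=3$ (odd), $e=6(3+1)=24$, giving $e/12=2$ and $\kappa=0$; and for $d=4$ (even), $e=6\cdot 4=24$, again giving $\kappa=0$. This handles the first two cases of the claim.

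Second, for $d\ge 5$, I would observe that in both parity cases the Euler number satisfies $e(X^{(d)})\ge 36$. Indeed, if $d$ is even and $\ge 6$, then $e=6d\ge 36$, while if $d$ is odd and $\ge 5$, then $e=6(d+1)\ge 36$. In each case $e/12\ge 3$, so Lemma \ref{LemmaKdim} yields $\kappa(X^{(d)})=1$.

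There is no real obstacle here; the only thing worth checking as a sanity test is that the two formulas for $e(X^{(d)})$ always produce a multiple of $12$ (as required by Noether's formula), which they do: $6d$ is divisible by $12$ exactly when $d$ is even, and $6(d+1)$ exactly when $d$ is odd, matching the case split in the preceding lemma. The corollary then follows immediately by putting the three ranges of $d$ together.
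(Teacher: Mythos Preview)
Your proposal is correct and is exactly the argument the paper has in mind: the corollary is stated immediately after the Euler characteristic formula as a direct consequence of Lemma~\ref{LemmaKdim}, and your case-by-case computation of $e(X^{(d)})/12$ is the intended (and only reasonable) way to unpack that implication.
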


\subsection{Mildly ramified base change} Let $f\in \C(t)$ be non-constant and consider the elliptic surface $\pi_{d,f}\colon X^{(d)}_f\to \Pro^1$ obtained from $\pi_{d}\colon X^{(d)}\to \Pro^1$ by base change by $f$. It admits the Weierstrass equation
$$
y^2=x(x+1)(x+f(t)^d).
$$
Recall that for $d\ge 5$ we have $\kappa(X^{(d)})=1$ (cf.\ Corollary \ref{Corok1}).

\begin{lemma}\label{LemmaFiniteness} Let $d\ge 5$ and suppose that $\pi_{d}\colon X^{(d)}\to \Pro^1$ has the Ulmer property for some integer bound $M\ge 2$ (cf.\ Section \ref{SecUlmer}). Let $f(t)\in \C(t)$ be mildly ramified of prime degree $p\ge M+2$. Then $X^{(d)}_f$ only contains finitely many rational curves. 
\end{lemma}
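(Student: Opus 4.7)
I would classify rational curves $C\subseteq X^{(d)}_f$ by whether they are contained in a fibre of $\pi_{d,f}$ or not. Write $F\colon X^{(d)}_f\to X^{(d)}$ for the base change map. The fibre case is immediate: singular fibres of $\pi_{d,f}$ sit over preimages under $f$ of the finitely many singular points of $\pi_d$, there are finitely many of them, and each is a finite union of $\Pro^1$'s, so only finitely many rational components arise.

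Now suppose $C$ is not contained in a fibre, and set $D=F(C)$. Then $D$ is a rational curve in $X^{(d)}$ not contained in any fibre of $\pi_d$, so the Ulmer property yields $q:=\deg(\pi_d|_D)\le M$. Restricting the identity $\pi_d\circ F=f\circ\pi_{d,f}$ to $C$ gives $q\cdot\deg(F|_C)=p\cdot\deg(\pi_{d,f}|_C)$; since $p$ is prime with $p>M\ge q$ and $\deg(F|_C)\le\deg F=p$, this forces $\deg(F|_C)=p$ and $\deg(\pi_{d,f}|_C)=q$. If $q=1$, then $D$ is a section of $\pi_d$, so by Lemma \ref{LemmaRk0} there are only finitely many such $D$; for each one the preimage $F^{-1}(D)$ is the fibre product of an isomorphism with $f$, hence isomorphic to $\Pro^1$, so $C=F^{-1}(D)$ is uniquely determined and we get only finitely many $C$.

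The remaining case $q\ge 2$ should be ruled out by producing a rational curve inside a positive-genus curve. Normalize $D$ via $\nu_D\colon\tilde{D}=\Pro^1\to D$ and set $h=\pi_d\circ\nu_D$, of degree $q$. The fibre product $Z=\tilde{D}\times_{\Pro^1,h,f}\Pro^1$, viewed as the plane curve $h(x)=f(y)$, satisfies the hypotheses of Lemma \ref{LemmaMildly} with $r=1$ (since $f$ is mildly ramified of prime degree $p\ge q+2$ and $q\ge 2$), so $Z$ is reduced, geometrically irreducible, of geometric genus $\ge 1$. On the other hand, the normalization $\nu_C\colon\tilde{C}=\Pro^1\to C$ induces morphisms $\bar{F}\colon\tilde{C}\to\tilde{D}$ of degree $p$ and $g:=\pi_{d,f}\circ\nu_C\colon\tilde{C}\to\Pro^1$ of degree $q$ satisfying $h\circ\bar{F}=f\circ g$, so the universal property of the fibre product gives a morphism $\tilde{C}\to Z$. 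A degree count ($\deg(Z\to\tilde{D})=p=\deg\bar{F}$) shows this morphism is birational, hence $\tilde{C}\cong\Pro^1$ and $Z$ share the same geometric genus, contradicting positivity.

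The main obstacle is the case $q\ge 2$: the primality of $p$ together with the Ulmer bound $q\le M<p$ are used critically to force $\deg(F|_C)=p$ exactly, which is precisely what makes the induced morphism $\tilde{C}\to Z$ birational rather than of some larger degree, so that Lemma \ref{LemmaMildly} produces the desired genus contradiction.
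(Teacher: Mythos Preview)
Your argument is correct and follows essentially the same route as the paper's proof: push a non-fibral rational curve $C$ forward to $X^{(d)}$, bound the degree of its image over the base via the Ulmer property, and if that degree exceeds $1$ use Lemma~\ref{LemmaMildly} on the fibre product $f(t)=g(s)$ to produce a positive-genus curve dominated by a rational one. One simplification worth noting: your degree computation forcing $\deg(F|_C)=p$ and hence birationality of $\tilde{C}\to Z$ is unnecessary, since $Z$ is already irreducible by Lemma~\ref{LemmaMildly} and the map $\tilde{C}\to Z$ is non-constant (because $C$ is not fibral), so you immediately have a dominant map from $\Pro^1$ to a curve of genus $\ge 1$; the paper's proof stops there.
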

\begin{proof} Let $C\subseteq X^{(d)}_f$ be a rational curve which is not contained in a fibre of $\pi_{d,f}$. Let $u\colon X^{(d)}_f\to X^{(d)}$ be the morphism induced by the base change, so that $\pi_d\circ u= f\circ \pi_{d,f}$. We claim that $u(C)$ is a section of $\pi_d$. 

Suppose, for the sake of contradiction, that $u(C)$ is not a section of $\pi_d$. Note that $u(C)$ is a rational curve; let $\nu\colon\Pro^1\to u(C)$ be its desingularization and let $g=\pi_d\circ \nu\in \C(s)$ where $s$ is an affine coordinate in $\Pro^1$. The maps that we have defined are summarized in the following commutative diagram:

$$
\begin{tikzcd}
C \arrow[hookrightarrow]{r}  & X^{(d)}_f \arrow{r}{u} \arrow{d}{\pi_{d,f}}& X^{(d)} \arrow{d}{\pi_d} & \Pro^1 \arrow{l}{\nu} \arrow{dl}{g} \\
  & \Pro^1 \arrow{r}{f} & \Pro^1.  &
\end{tikzcd}
$$

Since $u(C)$ is not a section of $\pi_d$ and $\nu$ is birational onto $u(C)$, the integer $q=\deg(g)$ is at least $2$ and by our assumption on the Ulmer property we have $q\le M$. By Lemma \ref{LemmaMildly} and the assumption $p\ge M+2$ the equation $f(t)=g(s)$ defines a reduced irreducible curve $V$ of geometric genus $\gfrak\ge 1$. However, since $\pi_d\circ u= f\circ \pi_{d,f}$ and $\nu$ is birational, there is a rational map $C\dasharrow V$ and $C$ has geometric genus $0$; a contradiction.

Finally, by Lemma \ref{LemmaRk0} we see that necessarily $u(C)$ is a torsion section of $\pi_d\colon X^{(d)}\to \Pro^1$ and we obtain finiteness.
\end{proof}


\subsection{Constructing an elliptic surface under Ulmer's conjecture}\label{SecConjs}

\begin{lemma}\label{LemmaConjs} Suppose that Conjecture \ref{Conj2} holds. Then Conjecture \ref{ConjAdHoc} holds.
\end{lemma}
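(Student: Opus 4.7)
The plan is to construct the surface by combining the Legendre pullback $\pi_d\colon X^{(d)}\to\Pro^1$ of Section \ref{SecLegendre} with a further base change by a carefully chosen rational function $f\in \Q(t)$; under Conjecture \ref{Conj2}, Lemma \ref{LemmaFiniteness} supplies the finiteness of rational curves, while the Legendre origin supplies multiplicative fibres.

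First I would fix an integer $d\ge 5$ (say $d=5$), so that by Corollary \ref{Corok1} we have $\kappa(X^{(d)})=1$. Conjecture \ref{Conj2} then yields an integer $M\ge 2$ such that $\pi_d$ has the Ulmer property with bound $M$ in the sense of Section \ref{SecUlmer}. By Bertrand's postulate I can choose a prime $p\ge M+2$, and by Corollary \ref{CoroExistencef} I can then pick a mildly ramified $f\in \Q(t)$ of degree $p$. The candidate surface is $\pi_{d,f}\colon X^{(d)}_f\to \Pro^1$, the (relatively minimal) elliptic surface over $\Q$ obtained from $\pi_d$ by base change along $f$.

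Next I would verify the two conditions of Conjecture \ref{ConjAdHoc} for $\pi_{d,f}$. Condition (ii) follows almost directly from Lemma \ref{LemmaFiniteness}, which gives finitely many rational curves in $X^{(d)}_f$ not contained in a fibre of $\pi_{d,f}$; since $\pi_{d,f}$ has only finitely many singular fibres, each with finitely many components, the remaining rational curves (those inside fibres) contribute only finitely many, so the total count is finite. For condition (i), Lemma \ref{LemmaBadFibres} tells us that $\pi_d$ has a multiplicative fibre of type $I_{2d}$ at $t=0$, and since $f$ is non-constant it is surjective onto $\Pro^1$ and has at least one zero at some point $a\in\overline{\Q}\subseteq \C$; base change preserves multiplicative reduction (a type $I_n$ fibre pulls back to type $I_{ne}$, where $e$ is the local ramification index of $f$), so the fibre of $\pi_{d,f}$ above $a$ is multiplicative over $\C$.

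I do not expect any serious obstacle: the substantive input, namely the interaction between the Ulmer property for $\pi_d$, the mild ramification of $f$, and the Mordell--Weil triviality of Lemma \ref{LemmaRk0}, is already packaged inside Lemma \ref{LemmaFiniteness}, and the present lemma only has to arrange its hypotheses. The single bookkeeping point worth being careful about is that ``base change of an elliptic surface'' must be understood in the relatively minimal sense; this is standard, and in particular does not affect the multiplicative fibres we rely on, since multiplicative fibres remain multiplicative (of higher multiplicity) after relative minimalization.
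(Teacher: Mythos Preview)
Your proposal is correct and follows essentially the same route as the paper: fix $d\ge 5$, invoke Conjecture \ref{Conj2} to get the Ulmer bound $M$ for $\pi_d$, choose a mildly ramified $f\in\Q(t)$ of prime degree $p\ge M+2$ via Corollary \ref{CoroExistencef}, and then apply Lemma \ref{LemmaFiniteness} for condition (ii) and Lemma \ref{LemmaBadFibres} plus base change for condition (i). Your extra remarks (Bertrand's postulate, separating out rational curves in fibres, relative minimality) are harmless elaborations; note that Lemma \ref{LemmaFiniteness} as stated already asserts finiteness of \emph{all} rational curves in $X^{(d)}_f$, so the fibre-component bookkeeping is not even needed.
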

\begin{proof} Fix $d\ge 5$. Assuming Conjecture \ref{Conj2} we have that $\pi_d\colon X^{(d)}\to \Pro^1$ has the Ulmer property for some integer bound $M\ge 2$. By Corollary \ref{CoroExistencef} there is a mildly ramified $f\in \Q(t)$ of prime degree $p\ge M+2$. By Lemma \ref{LemmaFiniteness} the elliptic surface $\pi_{d,f}\colon X^{(d)}_f\to \Pro^1$ satisfies that $X^{(d)}_f$ contains only finitely many rational curves.

Lemma \ref{LemmaBadFibres} shows that $\pi_d$ has fibres of multiplicative reduction, and since $\pi_{d,f}\colon X^{(d)}_f\to \Pro^1$ is a base change of $\pi_{d}\colon X^{(d)}\to \Pro^1$, we conclude that $\pi_{d,f}$ also has fibres of multiplicative reduction. 

Finally, since $\pi_{d,f}\colon X^{(d)}_f\to \Pro^1$ admits the Weierstrass equation
$$
y^2=x(x+1)(x+f(t)^d)
$$
we see that $\pi_{d,f}\colon X^{(d)}_f\to \Pro^1$ can be defined over $\Q$.
\end{proof}

We remark that for Conjecture \ref{ConjAdHoc} to hold it is not necessary to have Conjecture \ref{Conj2} in full generality; it suffices that for some $d\ge 5$ the elliptic surface $\pi_{d}\colon X^{(d)}\to \Pro^1$ has the Ulmer property for some bound $M$.


\section{The main result}


\subsection{Rational torsion} 

\begin{lemma}\label{LemmaTorsion}
Let $E$ be an elliptic curve over $\Q(z)$ and let $P\in E(\Q(z))$ be a torsion point. Then the order of $P$ is at most $12$.
\end{lemma}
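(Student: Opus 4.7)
The plan is to reduce the statement to Mazur's classical theorem on rational torsion of elliptic curves over $\Q$ via a specialization argument. Let $P\in E(\Q(z))$ be a torsion point of order $n$, and let $\pi\colon X\to \Pro^1_\Q$ be the associated relatively minimal elliptic surface with zero section $\sigma_0$, so that $P$ corresponds to a section $s_P\colon\Pro^1\to X$ defined over $\Q$.

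First I would observe that for each prime divisor $\ell$ of $n$, the section $s_{(n/\ell)P}$ is distinct from $\sigma_0$ (because $(n/\ell)P\neq O$ in $E(\Q(z))$), and therefore it can agree with $\sigma_0$ only at finitely many points of $\Pro^1$. Together with the finite set of points of bad reduction of $\pi$, this gives a finite set $\Sigma\subseteq \Pro^1(\Q)$ outside of which the specialization $z\mapsto z_0$ has the following properties: the fibre $E_{z_0}=X_{z_0}$ is an elliptic curve over $\Q$, the specialization $P(z_0)\in E_{z_0}(\Q)$ is well-defined, and $(n/\ell)P(z_0)\neq O$ for every prime $\ell\mid n$. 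The second condition forces $P(z_0)$ to have order exactly $n$ in $E_{z_0}(\Q)$. Since $\Q$ is infinite we may choose some $z_0\in \Q\setminus \Sigma$, thereby producing an actual elliptic curve $E_{z_0}/\Q$ that contains a rational torsion point of order $n$.

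Finally I would invoke Mazur's theorem: for every elliptic curve $E'/\Q$, the torsion subgroup $E'(\Q)_{\rm tors}$ is one of the fifteen groups on Mazur's list, and in particular every element of $E'(\Q)_{\rm tors}$ has order at most $12$. Applied to $E_{z_0}/\Q$ and $P(z_0)$, this yields $n\le 12$, as desired. The only delicate step is the order-preservation under specialization, but this is completely standard once one notes that two distinct sections of a relatively minimal elliptic surface meet at only finitely many points; everything else is bookkeeping about avoiding a finite bad locus in $\Pro^1(\Q)$.
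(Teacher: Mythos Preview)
Your proof is correct and follows the same strategy as the paper: reduce to Mazur's torsion theorem over $\Q$ by specializing at a suitable rational point of $\Pro^1$. The paper handles the specialization step by treating the constant case separately and, for non-constant $E$, citing Silverman's injectivity-of-specialization theorem as a black box, whereas you give a direct and more elementary argument (distinct sections meet only finitely often) that works uniformly; both are fine.
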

\begin{proof} For elliptic curves over $\Q$ the analogous statement is a theorem of Mazur \cite{Mazur}; this proves the result if the elliptic curve $E$ is actually defined over $\Q$. For non-constant elliptic curves over $\Q(z)$ this follows from injectivity of specialization of elliptic surfaces, see \cite{Silverman}. 
\end{proof}


\subsection{Defining $\Q$ in $\Q(z)$} 

\begin{theorem}\label{ThmV1} Assume Conjectures \ref{Conj1} and \ref{ConjAdHoc}. Then $\Q$ is Diophantine in $\Q(z)$.
\end{theorem}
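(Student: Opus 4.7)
The plan is to apply Lemma \ref{LemmaCriterion} with $A=\Q(z)$; for this it suffices to exhibit a Diophantine set $T\subseteq \Q(z)$ that in fact lies in $\Q$ and whose intersection with $\N$ has positive lower density. The set $T$ will be defined as the set of parameters $\alpha$ for which a suitable elliptic curve $\Ecal_\alpha/\Q(z)$ acquires a $\Q(z)$-point of infinite order.

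First, I would invoke Conjecture \ref{ConjAdHoc} together with the construction of Section \ref{SecConvenientSurf} to fix an elliptic surface $\pi\colon X\to \Pro^1$ defined over $\Q$, with a multiplicative fibre on $\A^1$, and a Weierstrass equation $y^2=x^3+A(t)x+B(t)$ with $A,B\in \Q[t]$ enjoying the rigidity property: for every non-constant $f\in \Q(z)$, the base-changed elliptic curve $E_f/\Q(z)$ has Mordell--Weil rank zero. For $\alpha\in \Q(z)$ let $\Ecal_\alpha$ denote the plane cubic $y^2=x^3+A(\alpha)x+B(\alpha)$, and set
$$
T=\{\alpha\in\Q(z) : \Ecal_\alpha \text{ admits a }\Q(z)\text{-rational affine point of infinite order}\}.
$$

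Next I would verify that $T$ is Diophantine over $\Q(z)$: by Lemma \ref{LemmaTorsion}, any torsion point on an elliptic curve over $\Q(z)$ has order at most $12$, so an affine $\Q(z)$-point $P=(x_0,y_0)$ of $\Ecal_\alpha$ has infinite order iff $kP\ne O$ for each $k=1,\ldots,12$. Each such condition is expressible polynomially through the chord--tangent addition formulas, using the fact that in the field $\Q(z)$ the inequation $u\ne 0$ is Diophantine via $\exists v,\ uv=1$. To check $T\subseteq \Q$, suppose $\alpha\in T\setminus \Q$, so $\alpha=f(z)$ is non-constant; since $\Delta(t)$ is a nonzero polynomial, $\Delta(f(z))$ is a nonzero element of $\Q(z)$ and hence $\Ecal_\alpha=E_f$ is genuinely an elliptic curve, which by the rigidity property has trivial Mordell--Weil rank over $\Q(z)$, contradicting the existence of a point of infinite order. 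For the density claim, if $n\in \N$ is such that $X_n$ is smooth and $\rk X_n(\Q)>0$, then any non-torsion $P\in X_n(\Q)$ remains non-torsion in $\Ecal_n(\Q(z))$: indeed $\Ecal_n/\Q(z)$ is the constant elliptic curve obtained from $X_n$ by base change along $\Q\subseteq \Q(z)$, and because $\Pro^1$ admits no non-constant morphism to a curve of genus one, $\Ecal_n(\Q(z))=X_n(\Q)$. Thus $n\in T$, and Conjecture \ref{Conj1} (applicable because $\pi$ has a multiplicative fibre on $\A^1$) produces such $n$ with positive lower density in $\N$. Lemma \ref{LemmaCriterion} then concludes that $\Q$ is Diophantine in $\Q(z)$.

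The principal technical obstacle I anticipate is the uniform Diophantine encoding of ``infinite order'' for all $\alpha\in \Q(z)$, since the chord--tangent rational formulas involve denominators that may misbehave at the finitely many $\alpha\in \Q$ with $\Delta(\alpha)=0$, where $\Ecal_\alpha$ is singular. This is harmless for the argument: such singular values contribute at most finitely many additional elements to $T$, which does not affect densities, and the essential containment $T\subseteq \Q$ is secured by the rank-zero rigidity, regardless of the behaviour at singular fibres.
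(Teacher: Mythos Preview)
Your proposal is correct and follows essentially the same route as the paper: invoke the rigidity surface from Section \ref{SecConvenientSurf} under Conjecture \ref{ConjAdHoc}, define the set of parameters $\alpha$ for which $\Ecal_\alpha$ has a $\Q(z)$-point of infinite order, use the torsion bound of Lemma \ref{LemmaTorsion} to make this Diophantine, deduce $T\subseteq\Q$ from rank-zero rigidity, and apply Conjecture \ref{Conj1} and Lemma \ref{LemmaCriterion}. The only cosmetic difference is that the paper excises the finite singular locus $\Sigma$ from the outset (defining $\Rcal\subseteq\Q(z)\smallsetminus\Sigma$) rather than arguing, as you do, that the finitely many bad $\alpha$ are harmless after the fact; both treatments are fine.
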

\begin{proof} Assuming Conjecture \ref{ConjAdHoc}, we can take an elliptic surface $\pi\colon X\to \Pro^1$ with Weierstrass equation
$$
y^2=x^3+A(t)x+B(t)
$$
with $A,B\in \Q[t]$, as the one afforded by Lemma \ref{LemmaErk0}. Let $\Sigma\subseteq \Q$ be the finite set of rational numbers $c$ such that
$$
y^2=x^3+A(c)x+B(c)
$$
is singular. For $f\in \Q(z)-\Sigma$ (possibly constant) let $E_f$ be the elliptic curve over $\Q(z)$ defined by
$$
y^2=x^3+A(f)x+B(f).
$$
Define the set
$$
\Rcal =\{f\in \Q(z)-\Sigma : \rk E_f(\Q(z))>0  \}\subseteq \Q(z).
$$
We claim that $\Rcal$ is Diophantine. Indeed,  $\rk E_f(\Q(z))>0$ if and only if there are $u,v\in \Q(z)$ such that $v^2=u^3+A(f)u+B(f)$ and $(u,v)$ is not a torsion point of $E_f$ of order at most $12$ (cf.\ Lemma \ref{LemmaTorsion}). The latter is in fact a list of $12$ Diophantine conditions, which can be seen by using division polynomials of $E_f$ and the fact that $\ne$ is Diophantine over any field. This proves that $\Rcal$ is Diophantine in $\Q(z)$.

Since $\rk E_f(\Q(z))=0$ whenever $f$ is non-constant (recall that $\pi\colon X\to \Pro^1$ is produced by Lemma \ref{LemmaErk0}) we conclude that, in fact, $\Rcal\subseteq \Q$.

By the Riemann-Hurwitz formula, we have that for $c\in \Rcal\subseteq \Q$ all the $\Q(z)$-rational points of $E_c$ are actually $\Q$-rational, for otherwise we would get a non-constant map from $\Pro^1$ to $E_c$. Therefore
$$
\Rcal = \{c\in \Q-\Sigma : \rk E_c(\Q)>0\}.
$$
Let $S=\Rcal\cap \N$. Since $\pi\colon X\to \Pro^1$ has a multiplicative fibre over $\A^1$, Conjecture \ref{Conj1} gives that $\delta_*(S)>0$. We conclude by Lemma \ref{LemmaCriterion}.
\end{proof}


\subsection{All listable sets in $\Q$} In view of Lemma \ref{LemmaConjs}, the following result implies Theorem \ref{ThmMain}.

\begin{theorem}\label{ThmV2} Assume Conjectures \ref{Conj1} and \ref{ConjAdHoc}. Then every listable set $T\subseteq \Q$ is Diophantine in $\Q(z)$.
\end{theorem}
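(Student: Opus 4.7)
The plan is to reduce the statement to the Davis--Putnam--Robinson--Matiyasevich (DPRM) theorem via the intermediate step of showing that $\N$ itself is a Diophantine subset of $\Q(z)$. Since $\N\subseteq \Q$ is listable, Theorem \ref{ThmV2} formally entails this strengthening of Theorem \ref{ThmV1}; I would prove it first and then derive the rest. Once $\N$ is Diophantine in $\Q(z)$, DPRM provides for every listable $\widetilde{T}\subseteq \N^k$ an integer-polynomial Diophantine definition over $\N$, and these polynomial equations lift verbatim to $\Q(z)$ by restricting each auxiliary variable to the Diophantine subset $\N\subseteq \Q(z)$. A general listable $T\subseteq \Q$ is then handled by the encoding $q\mapsto (s,a,b)\in \{0,1\}\times \N\times \N_{\ge 1}$ with $q=(-1)^s a/b$: the encoded listable $\widetilde{T}\subseteq \N^3$ is Diophantine in $\Q(z)$ by the previous step, and the polynomial identity $bq=(-1)^s a$ recovers $T$.

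The core task is thus to produce a Diophantine definition of $\N$ in $\Q(z)$. Theorem \ref{ThmV1}'s proof via Lemma \ref{LemmaCriterion} yields a Diophantine set $T^*\subseteq \Q$ with $\N\subseteq T^*$, but $T^*$ may contain non-integer rationals and must be trimmed by an additional Diophantine condition forcing integrality. Two routes suggest themselves. First, a Pell-type construction in $\Q(z)$: among $\Q(z)$-solutions of $X^2-(z^2-1)Y^2=1$, the polynomial Chebyshev solutions $(T_n(z),U_{n-1}(z))$ are parametrized by $n\in \Z$ and recover $n$ via the identity $U_{n-1}(1)=n$, but enforcing polynomiality of $(X,Y)$ by a positive existential formula (i.e., uniform no-finite-pole regularity) is essentially the same kind of integrality one is trying to capture. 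Second, a more elliptic-surface-native refinement: choose the auxiliary surface so that the rank-positive rational fibres sit (up to a negligible set) inside $\N$, thereby forcing $\Rcal\subseteq \N$ and collapsing the Schnirelmann superset $T^*$ down to $\N$.

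The main obstacle is precisely this intermediate step: defining $\N$ by a positive existential formula in $\Q(z)$. The analogous problem for $\Z$ in $\Q$ alone is a notorious open question, and success here must exploit structure of $\Q(z)$ beyond $\Q$---either valuation-theoretic data (as in the Pell approach) or a delicate arithmetic choice of elliptic surface forcing the positive-rank locus to concentrate on integers. Once $\N$ is known to be Diophantine in $\Q(z)$, the DPRM lifting and the encoding-decoding of rationals into integer triples are routine.
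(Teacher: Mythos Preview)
Your overall architecture is exactly the paper's: first obtain $\Z$ (equivalently $\N$) as a Diophantine subset of $\Q(z)$, then lift listable subsets of $\Q$ via DPRM together with the numerator/denominator encoding. The encoding step you describe is correct and is essentially what the paper packages by citing \cite{Pasten}. The genuine gap is that you leave the crucial step---defining $\Z$ in $\Q(z)$---as an unresolved ``main obstacle,'' and neither of the two routes you sketch is actually carried through.

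The paper closes this gap in one line by invoking \cite{Denef}: once $\Q$ is known to be Diophantine in $\Q(z)$ (this is Theorem~\ref{ThmV1}, which uses the conjectures), the constructions in Section~3 of Denef's paper give that $\Z$ is Diophantine in $\Q(z)$. Your instinct to try a Pell equation is in fact the right one, and your worry that $X^2-(z^2-1)Y^2=1$ admits non-polynomial solutions over $\Q(z)$ is legitimate---for instance $\bigl(\tfrac{(1+a^2)z-2a}{(1+a^2)-2az},\,\tfrac{1-a^2}{(1+a^2)-2az}\bigr)$ is a solution for every $a\in\Q$, so the naive ``evaluate $Y$ at $z=1$'' recipe does not isolate $\Z$. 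Denef's argument is more refined than this, and the point is that having $\Q$ available as a Diophantine predicate is precisely what lets one pass from his Diophantine \emph{model} of the integers in $\Q(z)$ to the literal subset $\Z\subseteq\Q(z)$. Your analogy with the open problem of defining $\Z$ inside $\Q$ is therefore misleading: here the transcendental $z$ gives extra leverage, and that leverage was already exploited in 1978. You should read Section~3 of \cite{Denef} rather than try to reinvent it.

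Your second proposed route---choosing the auxiliary elliptic surface so that the positive-rank fibres lie only over integers---has no mechanism behind it; there is no reason such a surface should exist, and nothing in the paper's setup would produce one. That idea should be dropped.
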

\begin{proof}
By Theorem \ref{ThmV1} we have that $\Q$ is Diophantine in $\Q(z)$. Thus, it follows from the results in Section 3 of \cite{Denef} that $\Z$ is Diophantine in $\Q(z)$. From general principles (see for instance Section 4 in \cite{Pasten}) we know that listable subsets of $\Q$ are positive existentially definable over $\Q$ in the language $\{0,1,+,\times, =, \Z\}$. The result follows.
\end{proof}


\section{Acknowledgments}

We thank Thanases Pheidas for encouraging us to work on this problem and we thank Jerson Caro for his feedback on a first version of this manuscript. We are grateful to Arno Fehm for answering several questions.

N.G.-F. was supported by ANID Fondecyt Regular grant 1211004 from Chile.

H.P. was supported by ANID (ex CONICYT) Fondecyt Regular grant 1190442 from Chile.

This material is based upon work supported by the National Science Foundation under Grant No. DMS-1928930 while the authors participated in the program Definability, Decidability, and Computability in Number Theory, part 2,  hosted by the Mathematical Sciences Research Institute in Berkeley, California, during the Summer of 2022.



\begin{thebibliography}{9}         

\bibitem{BCDT} C. Breuil, B. Conrad, F. Diamond, R. Taylor, \emph{On the modularity of elliptic curves over $\mathbb{Q}$: wild $3$-adic exercises}. J. Amer. Math. Soc. 14 (2001), no. 4, 843-939. 

\bibitem{DaansThesis} N. Daans, \emph{Existential first-order definitions and quadratic forms}. Thesis, Universiteit Antwerpen (2022)

\bibitem{Denef} J. Denef, \emph{The Diophantine problem for polynomial rings and fields of rational functions}. Trans. Amer. Math. Soc. 242 (1978), 391-399.

\bibitem{FehmGeyer} A. Fehm, W.-D. Geyer, \emph{A note on defining transcendentals in function fields.} J. Symbolic Logic 74 (2009), no. 4, 1206-1210. 

\bibitem{Goldberg} L. Goldberg, \emph{Catalan numbers and branched coverings by the Riemann sphere}.  Adv. Math. 85 (1991), no. 2, 129-144. 


\bibitem{Helfgott} H. Helfgott, \emph{On the behaviour of root numbers in families of elliptic curves}. Preprint (2009) \url{https://arxiv.org/abs/math/0408141}

\bibitem{KoeLarge} J. Koenigsmann,  \emph{Defining transcendentals in function fields}. J. Symbolic Logic 67 (2002), no. 3, 947-956.

\bibitem{Mazur} B. Mazur, \emph{Modular curves and the Eisenstein ideal}. With an appendix by Mazur and M. Rapoport. Inst. Hautes \'Etudes Sci. Publ. Math. No. 47 (1977), 33-186 (1978).

\bibitem{Nathanson} M. Nathanson, \emph{Elementary methods in number theory}. Graduate Texts in Mathematics, 195. Springer-Verlag, New York, 2000.

\bibitem{Pasten} H. Pasten,  \emph{Notes on the DPRM property for listable structures}. J. Symb. Log. 87 (2022), no. 1, 273-312.

\bibitem{RR} R. Robinson, \emph{The undecidability of pure transcendental extensions of real fields}. Z. Math. Logik Grundlagen Math. 10 (1964), 275-282. 

\bibitem{SchuettShioda} M. Sch\"utt, T. Shioda, \emph{Elliptic surfaces}. Algebraic geometry in East Asia--Seoul 2008, 51-160, Adv. Stud. Pure Math., 60, Math. Soc. Japan, Tokyo, 2010.

\bibitem{Silverman} J. Silverman, \emph{Heights and the specialization map for families of abelian varieties}. J. Reine Angew. Math. 342 (1983), 197-211. 

\bibitem{TaylorWiles} R. Taylor, A. Wiles, \emph{Ring-theoretic properties of certain Hecke algebras}. Ann. of Math. (2) 141 (1995), no. 3, 553-572. 

\bibitem{Ulmer} D. Ulmer, \emph{Rational curves on elliptic surfaces}. J. Algebraic Geom. 26 (2017), no. 2, 357-377.

\bibitem{UlmerLegendre} D. Ulmer, \emph{Explicit points on the Legendre curve}. J. Number Theory 136 (2014), 165-194.

\bibitem{Wiles} A. Wiles, \emph{Modular elliptic curves and Fermat's last theorem}. Ann. of Math. (2) 141 (1995), no. 3, 443-551.

\end{thebibliography}
\end{document}